\documentclass[12pt]{amsart}
\usepackage{amsmath}
\usepackage{amsthm}
\usepackage{amssymb}
\usepackage{amscd}
\usepackage{hyperref}
\usepackage{color}

\setlength{\textwidth}{6.5in} \setlength{\textheight}{8.5 in}
\setlength{\oddsidemargin}{0.0 in}
\setlength{\evensidemargin}{\oddsidemargin}
\hfuzz2pt 
\vfuzz1.5pt


\newcommand{\gothic}{\mathfrak}
\newcommand{\p}{{\gothic{p}}}

\newcommand{\m}{{\gothic{m}}}

\newcommand{\8}{{\infty}}

\newcommand{\Ass}{\operatorname{Ass}}

\newcommand{\charac}{\operatorname{char}}
\newcommand{\depth}{\operatorname{depth}}
\newcommand{\height}{\operatorname{ht}}

\newcommand{\pd}{\operatorname{pd}}
\newcommand{\Spec}{\operatorname{Spec}}
\newcommand{\Supp}{\operatorname{Supp}}

\newcommand{\syz}{\operatorname{syz}}
\newcommand{\Tor}{\operatorname{Tor}}
\newcommand{\Ext}{\operatorname{Ext}}

\newcommand{\IPD}{\operatorname{IPD}}

\newcommand{\Hom}{\operatorname{Hom}}
\newcommand{\cok}{\operatorname{coker}}

\newcommand{\inc}{\subseteq}

\newcommand{\length}{\ell}

\newcommand{\e}{\text{e}}
\newcommand{\ehk}{\e_{HK}}

\renewcommand{\hat}{\widehat}

\renewcommand{\phi}{\varphi}

\DeclareMathOperator{\eu}{e_{gHK}^{+}}
\DeclareMathOperator{\ed}{e_{gHK}^{-}}
\DeclareMathOperator{\hk}{e_{gHK}}
\newcommand{\fhk}[1]{f_{gHK}^{#1}}

\DeclareMathOperator{\lc}{H}
\DeclareMathOperator{\CH}{H}

\newcommand{\ses}[3]{0 \to {#1} \to {#2} \to {#3} \to 0}

\DeclareMathOperator{\mo}{\mathbf{mod}}
\newcommand{\ps}[1]{{{}^{n}\!#1}} 
\newcommand{\ul}[1]{\underline{#1}}

\newcommand{\rh}{\operatorname{RHom}}

\newcommand{\dtor}{\stackrel{L}{\otimes}}

\newcommand{\cotimes}{\operatorname{\hat{\otimes}_k}}


\theoremstyle{plain}
\newtheorem{thm}{Theorem}
\newtheorem{cor}[thm]{Corollary}
\newtheorem{prop}[thm]{Proposition}
\newtheorem{lemma}[thm]{Lemma}

\newtheorem{claim}{Claim}

\newtheorem{eg}[thm]{Example}

\theoremstyle{definition}
\newtheorem{defn}[thm]{Definition}

\theoremstyle{remark}
\newtheorem{rmk}[thm]{Remark}


\begin{document}
\title[On generalized Hilbert--Kunz function and multiplicity]
{On generalized Hilbert--Kunz function and multiplicity}
\author{Hailong Dao}
\address{Department of Mathematics\\
University of Kansas\\
 Lawrence, KS 66045-7523 USA}
\email{hdao@ku.edu}

\author{Ilya Smirnov}
\address{Department of Mathematics\\
University of Virginia\\
 Charlottesville, VA 22904-4137 USA}
\curraddr{
Department of Mathematics\\
Stockholm University\\
SE - 106 91 Stockholm, Sweden
}
\email{smirnov@math.su.se}

\date{\today}
\thanks{The first author is partially supported by NSF grant 1104017. Part of this  work is  supported by the National Science Foundation under Grant No. 0932078 000, while the authors were in residence at the Mathematical Science Research Institute (MSRI) in Berkeley, California, during the Commutative Algebra year in 2012-2013.}
\keywords{Frobenius endomorphism, generalized Hilbert--Kunz multiplicity, local cohomology, $\Ext, \Tor$, isolated singularity}

\subjclass{Primary: 13A35; Secondary:13D07, 13H10.}

\numberwithin{thm}{section}
\numberwithin{equation}{section}
\begin{abstract}
Let $(R,\m)$ be a local ring of characteristic $p>0$ and $M$ a finitely generated $R$-module. In this note we consider the limit: $\lim\limits_{n\to \infty} \frac{\length(\lc^0_\m(F^n(M)))}{p^{n\dim R}} $ where $F(-)$ is the Peskine--Szpiro functor. 
A consequence of our main results shows that the limit always exists when $R$ is excellent, equidimensional and has an isolated singularity. Furthermore, if $R$ is a complete intersection, then the limit is $0$ if and only if the projective dimension of $M$ is less than the Krull dimension of $R$.  We exploit this fact to give a quick proof that if $R$ is a complete intersection of dimension $3$, then the Picard group of the punctured spectrum of $R$ is torsion-free. Our results work quite generally for other homological functors and can be used to prove that certain limits recently  studied by Brenner exist over projective varieties. 
\end{abstract}

\maketitle

\section{Introduction}\label{intro}

For local rings in positive characteristic there is an intrinsic theory of multiplicity derived from the Frobenius endomorphism.
This notion originates from the work of Kunz (\cite{Kunz}) and was defined by Monsky (\cite{Monsky}) and called {\it Hilbert--Kunz} multiplicity.
Recently this theory have been intensely studied due to connections to tight closure theory, birational geometry, and its inherent very interesting and mysterious behavior. For a recent survey we refer the interested  to  (\cite{Hu}).

Let $(R, \m)$ be a local ring of characteristic $p > 0$ and dimension $d$ and $M$ be a finitely generated $R$-module. 
If $I$ is an $\m$-primary ideal, then $I^{[p^n]} := \{x^{p^n} \mid x \in I \}$ is also $\m$-primary. 
We define the {\it Hilbert--Kunz multiplicity} of $I$ as the limit
\[
\ehk(I) = \lim_{n \to \infty} \frac{\length (R/I^{[p^n]})}{p^{nd}}.
\]
The existence of this limit is not trivial and is a result of Monsky (\cite{Monsky}).
The sequence of lengths $\length (R/I^{[p^n]})$ is often called the Hilbert--Kunz function of $I$.

The definition can be naturally extended to finite length modules.
Let $\ps R$ denote an $R$-algebra obtained by the $n$th iterate of the Frobenius endomorphism, 
{\it i.e.,} $\ps R$ is abstractly isomorphic to $R$  as a ring and $r\ps s = \ps (r^{p^n}s)$ for any elements $r\in R$ and $\ps s \in \ps R$. 
Then, we may call the {\it Hilbert--Kunz multiplicity} of a finite length module $N$ the limit
\[
\ehk(N) = \lim_{n \to \infty} \frac{\length_{\ps R} (N \otimes_R \ps R)}{p^{nd}}.
\]
The existence of the limit was shown by Seibert (see Theorem~\ref{sei}).
It is easy to see that we indeed recover the original definition from the finite length module $N = R/I$.
 
In this paper we propose a further extension to arbitrary finitely generated modules, and, thus, not necessary $\m$-primary ideals.
Namely, for a finitely generated $R$-module $M$ we study the function  
\[\fhk M(n) :=\length_{\ps R}(\lc^0_\m(M \otimes_R \ps R))\]
and the limit (if it exists!)
\[\hk(M) := \lim_{n \to \infty} \frac{\fhk M(n)}{p^{nd}},\]
which we respectively call the {\it generalized Hilbert--Kunz function} and {\it generalized Hilbert--Kunz multiplicity} of $M$. 
Indeed, it is still not hard to see that if $I$ is an $\m$-primary ideal, then 
$\fhk {R/I}(n)$ is the usual Hilbert--Kunz function. 

As far as we know, 
$\fhk M$ appears for the first time in a paper by Ian Aberbach \cite{Ab} where it was shown that if $R$ is a domain essentially of finite type over a field and $\dim R/I=1$, then $\frac{\fhk {R/I}(n)}{p^{nd}}$ is bounded from above.  Recently, a more general form of this definition was studied by Epstein--Yao in \cite{EY} (see Definition \ref{keydef}).

However, the main inspiration for our work is 
is a recent astonishing result by Cutkosky (\cite[Corollary~11.3]{Cut}), 
who showed that under very mild conditions, the limit:
\[\lim_{n \to \infty} \frac{\length(\lc^0_{\m}(R/I^n))}{n^d}\]
exists. This limit is called $\epsilon$-multiplicity and was first defined as a limsup by Katz and Validashti in \cite{JK}.

The following theorem presents two important special cases 
of our main existence result, Corollary~\ref{highlc}, and the characterization of positivity from Corollary~\ref{posci}

\begin{thm}
Suppose that $M_\p$ has finite projective dimension for every prime ideal $\p \neq \m$ (this always holds when $R$ has an isolated singularity). Then 
\begin{enumerate}
\item If $R$ is excellent, equidimensional, and locally Cohen-Macaulay on the punctured spectrum, then $\hk(M)$ exists. 
\item If $R$ is Cohen-Macaulay, then $\hk(M)$ exists.
\end{enumerate}

Moreover, if $R$ is a complete intersection, then $\hk(M)=0$ if and only if $\pd M<\dim R$.  
\end{thm}

The last assertion is related to  two different facts about  complete intersections: 
Dutta and Miller have proved (\cite{D1, M1}) for a finite length module $M$ that $\hk (M) = \length(M)$ if and only if $\pd M<\infty$; 
and it was shown in~\cite{DLM} that under the assumptions of the Theorem, $\fhk M(n)=0$ for some $n$ if and only if 
$\pd M<\dim R$.  In Example~\ref{proj dim ex} we show necessity of the assumptions.

Our methods give more general results on the asymptotic behavior of various functors on the iterations of the Frobenius endomorphism, see  
\ref{Torci}, \ref{homcplx}, \ref{Extci}, \ref{Extci}. Results in the same spirit have been obtained by a number of authors  (\cite{AL, AM, D2, Li, M2}). 
For instance, we are able to prove that limits exist for higher local cohomology modules (see Theorem \ref{extlimit}). A special case of that result yields: 

\begin{cor}
Suppose that either $(R, \m)$ is Cohen-Macaulay or excellent and locally Cohen-Macaulay on the punctured spectrum. 
If $M$ a finitely generated $R$-module such that 
$\pd_{R_\p} M_\p < d - k$ on the punctured spectrum, then
\[\lim_{n \to \infty} \frac{\length_R (\lc^k_{\m} (F^n(M)))}{p^{nd}}\]
exists.

In particular, if $M$ is locally free on the punctured spectrum, then
the limit above exists for all $k < d$.
\end{cor}

These higher Hilbert--Kunz functions recently appeared in the work of Brenner, who
used our results, Theorems \ref{refl} and \ref{pre1}, to show 
that there exists a finite length module over a local hypersurface  with the irrational Hilbert--Kunz multiplicity (\cite{Bre}).
Furthermore, from this he is able to deduce that the Hilbert--Kunz multiplicity of a local ring may be irrational, 
thus providing a negative answer to a long-standing question.

We also apply our techniques to prove existence of the limits studied by Brenner over projective varieties in \cite{Bre}. In the following $F^{n*}$ denote the $n$th-iteration Frobenius pull-back. 
\begin{cor}
Let $X$ be a polarized projective variety over a field $k$ of characteristic $p$ of dimension $d$, with a fixed very ample invertible sheaf $\mathcal O_X(1)$. Let $\mathcal F$ be a vector bundle on $X$. Then for each $0<k<\dim X$ the limit
\[ \lim_{n\to\infty} \frac{\sum_{m\in \mathbb Z} h^k((F^{n\ast}\mathcal F)(m))}{p^{n(d+1)}}\]
exists if $X$ is  $(S_{k+2})$.
\end{cor}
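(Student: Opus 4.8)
The plan is to descend to the section ring of $X$ and then invoke Corollary~\ref{highlc}. First I would fix $R=\bigoplus_{m\ge 0}H^{0}(X,\mathcal O_{X}(m))$, the section ring with respect to $\mathcal O_{X}(1)$. Because $\mathcal O_{X}(1)$ is very ample, $R$ is a finitely generated graded $k$-algebra with $R_{0}=k$ and $\operatorname{Proj} R\isom X$; hence $R$ is excellent with $\dim R=\dim X+1=d+1$, and since $X$ is a variety $R$ is a domain, so equidimensional. Write $\m$ for the irrelevant maximal ideal of $R$ and set $M=\bigoplus_{m\in\mathbb Z}H^{0}(X,\mathcal F(m))$, a finitely generated graded $R$-module with $\widetilde M\isom\mathcal F$.

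Next I would translate the sum of sheaf-cohomology dimensions into a local-cohomology length of a Frobenius iterate. The absolute Frobenius of $X=\operatorname{Proj} R$ is induced by the Frobenius endomorphism of $R$ (rescaling the grading does not affect $\operatorname{Proj}$), so, passing to associated sheaves, $\widetilde{F^{n}_{R}(M)}\isom F^{n\ast}\mathcal F$; thus $F^{n}_{R}(M)$ is, up to a rescaling of the grading, the finitely generated graded module attached to $F^{n\ast}\mathcal F$. Combining this with the standard comparison $\lc^{i}_{\m}(N)_{m}\isom H^{i-1}(X,\widetilde N(m))$, valid for $i\ge 2$ and any finitely generated graded $N$, and with the hypothesis $0<k<d$ (which makes $2\le k+1\le d$), I obtain
\[
\sum_{m\in\mathbb Z}h^{k}\!\big((F^{n\ast}\mathcal F)(m)\big)=\sum_{m\in\mathbb Z}\dim_{k}\lc^{k+1}_{\m}\!\big(F^{n}_{R}(M)\big)_{m}=\length_{R}\,\lc^{k+1}_{\m}\!\big(F^{n}_{R}(M)\big),
\]
the last equality being valid precisely when $\lc^{k+1}_{\m}(F^{n}_{R}(M))$ has finite length, equivalently when the left-hand sum is finite. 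Hence the limit in question equals $\lim_{n}\length_{R}\,\lc^{k+1}_{\m}(F^{n}_{R}(M))/p^{n(d+1)}$, a generalized Hilbert--Kunz-type limit for the functor $\lc^{k+1}_{\m}(-)$ over the $(d+1)$-dimensional ring $R$; localizing at $\m$ puts us in the setting of Corollary~\ref{highlc}.

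It remains to verify the hypotheses of that corollary. On one hand, since $\mathcal F$ is a vector bundle, $M$, and hence every $F^{n}_{R}(M)$, is locally free on the punctured spectrum of $R$; in particular $\pd_{R_{\p}}M_{\p}=0<(d+1)-(k+1)=d-k$ for all $\p\ne\m$, where the strict inequality uses $k<d$. On the other hand, I would match the condition $(S_{k+2})$ on $X$ with the ring hypothesis needed by Corollary~\ref{highlc} (more precisely by the local cohomology result \ref{locci} that underlies it): for a non-maximal prime $\p$ of $R$ lying over a point $x\in X$, the ring $R_{\p}$ is a localization of $\mathcal O_{X,x}[t,t^{-1}]$, and a short depth computation shows that $R$ satisfies $(S_{j})$ on its punctured spectrum exactly when $X$ is $(S_{j})$; in particular $(S_{k+2})$ on $X$ forces $R_{\p}$ to be Cohen--Macaulay whenever $\dim R_{\p}\le k+1$. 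Together with the equidimensionality of $R$ and the finiteness-dimension formula for local cohomology, this is exactly what is needed to ensure that $\lc^{k+1}_{\m}(F^{n}_{R}(M))$ has finite length for every $n$ and that Corollary~\ref{highlc}, applied in cohomological degree $k+1$, delivers the limit. If $k$ is not perfect, I would first base change to the perfect closure $k^{1/p^{\infty}}$, which affects neither $X$ nor the numbers $h^{k}$ after the evident normalization. The main obstacle I anticipate is precisely this last dictionary --- relating Serre's condition $(S_{k+2})$ on $X$ to Cohen--Macaulayness of $R_{\p}$ in low dimensions on the punctured spectrum --- together with pinning down the identification of the Frobenius pull-back on $X$ with the Peskine--Szpiro functor on graded $R$-modules, including the grading rescaling and the imperfect-field bookkeeping.
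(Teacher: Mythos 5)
Your global-to-local reduction is exactly the paper's: pass to the (local ring at the vertex of the) section ring $R$ of dimension $d+1$, choose $M$ with $\widetilde M\cong\mathcal F$, identify $\sum_m h^k((F^{n*}\mathcal F)(m))$ with $\length\lc^{k+1}_{\m}(F^n_R(M))$, and feed this into the local existence results; the grading/Frobenius bookkeeping and the passage to a perfect residue field are fine. The problem is the last step: you invoke Corollary~\ref{highlc}, whose ring hypothesis is that $R$ be Cohen--Macaulay, or excellent and Cohen--Macaulay \emph{on the whole punctured spectrum}. The hypothesis actually available is only $(S_{k+2})$ on $X$, which gives $(S_{k+2})$ on the punctured spectrum of $R$ but by no means Cohen--Macaulayness there (e.g.\ $X$ could be an $(S_3)$ but non-CM fourfold with $k=1$). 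Your attempted repair --- ``$(S_{k+2})$ forces $R_\p$ to be CM whenever $\dim R_\p\le k+1$'' --- is true but does not verify the hypothesis of \ref{highlc}, which concerns \emph{all} non-maximal primes; and the parenthetical claim that \ref{locci} underlies \ref{highlc} is a misreference, since \ref{locci} is a complete-intersection statement irrelevant here.

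The gap closes by citing the right results, which is what the paper does: apply Theorem~\ref{extlimit} with cohomological index $k+1$ over the $(d+1)$-dimensional ring $R$, and then Corollary~\ref{lclimit}. In Theorem~\ref{extlimit} the pointwise condition at a prime $\p\neq\m$ is a disjunction (``$R_\p$ CM, or $M_\p$ free, or a depth inequality''), and since $\mathcal F$ is a vector bundle the alternative ``$M_\p$ free'' holds at every punctured prime, so no Cohen--Macaulay assumption on the punctured spectrum is needed at all; the only ring-level requirement is $(S_{(k+1)+1})=(S_{k+2})$ on the punctured spectrum, which is precisely the hypothesis of the corollary (and the index shift also explains the restriction $0<k<\dim X$, i.e.\ $k+1<d+1$). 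Corollary~\ref{lclimit} then applies because $\pd_{R_\p}M_\p=0<(d+1)-(k+1)$, and the finite length of $\lc^{k+1}_{\m}(F^n_R(M))$ comes out of the spectral-sequence argument in Theorem~\ref{extlimit} rather than needing a separate justification. With that substitution your argument is correct and coincides with the paper's (much terser) proof.
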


As another application we strengthen the main result of \cite{DLM} and reprove that
the Picard group of the punctured spectrum of a complete intersection has no torsion elements (see Theorem \ref{refl}).

\begin{thm}
Let $R$ be a local ring satisfying Serre's condition $(S_2)$ and  $\dim R\geq 3$. Let $I$ be a reflexive ideal that is locally free on $\Spec R-\{\m\}$. Then 
\begin{enumerate}
\item $\lim\limits_{n \to \infty} \frac{\length(\lc^2_{\m}(I^{(p^n)}))}{p^{nd}} = 
\lim\limits_{n \to \infty}  \frac{\length(\lc^2_{\m}(I^{\otimes p^n}))}{p^{nd}}$ exists. 
\item When $R$ is a complete intersection, the limit in part $(1)$ is $0$ if and only if $I$ is principal. In particular, the Picard group of  $\Spec R-\{\m\}$ has no torsion elements.
\end{enumerate}
\end{thm}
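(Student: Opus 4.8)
The plan is to reduce the whole statement to the behaviour of iterated Frobenius, which the earlier sections control. Write $U=\Spec R\setminus\{\m\}$ and let $\mathcal L=\widetilde I|_U$; since $I$ is reflexive, locally free on $U$, and $\dim R\ge 2$, $\mathcal L$ is invertible and $I$ is the unique reflexive module restricting to $\mathcal L$ on $U$. The key first observation is that $I^{\otimes q}$, $I^{(q)}$, and $F^n(I)$ (with $q=p^n$) all restrict to $\mathcal L^{\otimes q}$ on $U$: for $I^{\otimes q}$ and $I^{(q)}$ this is immediate because $I$ is locally principal on $U$, and for $F^n(I)$ it is the standard fact that the absolute Frobenius takes an invertible sheaf to its $q$-th tensor power (in $F^n(-)$ multiplication by $a$ becomes multiplication by $a^q$, so the gluing cocycle of $\mathcal L$ gets raised to the $q$-th power). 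Hence all three have reflexive hull isomorphic to $I^{(q)}$, the canonical map from each to its reflexive hull has kernel and cokernel of finite length (supported only at $\m$ since the modules are locally free on $U$), and pushing the two resulting short exact sequences through the long exact sequence of local cohomology — using that $\lc^i_\m$ kills finite length modules for $i\ge 1$ — yields $\lc^2_\m(I^{\otimes q})\isom\lc^2_\m(I^{(q)})\isom\lc^2_\m(F^n(I))$ for every $n$. Under the hypotheses these modules have finite length (the modules are locally free on $U$, $R$ is $S_2$ hence equidimensional, and $2<\dim R$), so part $(1)$ is reduced to showing that $\lim_n\length(\lc^2_\m(F^n(I)))/q^d$ exists, the equality of the two displayed limits then being automatic.

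For existence I would invoke the general results on iterations of Frobenius, specifically Theorem~\ref{pre1} (or the higher local cohomology consequence of \ref{locci}) applied with the module $I$ and cohomological degree $k=2$: because $\dim R\ge 3$ and $I$ is locally free on $U$ we have $\pd_{R_\p}I_\p=0<\dim R-2$ for every $\p\in U$, which is exactly the required hypothesis, and the $S_2$ / locally Cohen--Macaulay-on-$U$ condition is assumed. This gives part $(1)$.

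For part $(2)$, one implication is immediate: if $I$ is principal then $I\isom R$, hence $I^{(q)}\isom R$ and $\lc^2_\m(R)=0$ since $\depth R=\dim R\ge 3$, so the limit is $0$. For the converse, suppose $R$ is a complete intersection and the limit vanishes. Here I would use the complete-intersection rigidity established in \ref{locci}: over a complete intersection the vanishing of this limit forces $\pd_R I<\infty$ (the higher-cohomology analogue of part $(3)$ of the theorem in the introduction; equivalently, $\lc^2_\m(F^n(I))=0$ for $n\gg 0$). Since $I$ is then a rank-one reflexive $R$-module of finite projective dimension over the local ring $R$, taking determinants along a finite free resolution shows $\det I$ is a free rank-one module, and $\det I\isom I$ because $I$ is reflexive of rank one; therefore $I\isom R$ is principal. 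The statement about the Picard group drops out: if the class of $I$ in $\Pic(U)$ had finite order $m$, then $\mathcal L^{\otimes m}\isom\mathcal O_U$, so the $I^{(q)}$ run through only finitely many isomorphism classes as $q=p^n$ varies (those of $I^{(q\bmod m)}$), the lengths $\length(\lc^2_\m(I^{(q)}))$ stay bounded, the limit is $0$, and hence $I$ — and so its class — is trivial.

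The steps in part $(1)$ and the easy implication of part $(2)$ are formal once one notices $\widetilde{F^n(I)}|_U\isom\mathcal L^{\otimes q}$: the comparison of local cohomologies is a pair of long exact sequence chases and existence is supplied by the earlier theorems. The genuine difficulty is the converse of part $(2)$: upgrading ``the limit is $0$'' to the exact conclusion $\pd_R I<\infty$ (equivalently $\lc^2_\m(F^n(I))=0$ for $n\gg 0$) is precisely where the complete-intersection hypothesis is indispensable — it is false over general Gorenstein rings — and this rigidity, packaged in \ref{locci}, is the heart of the argument.
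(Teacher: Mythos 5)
Your reduction $\lc^2_{\m}(I^{\otimes q})\cong\lc^2_{\m}(I^{(q)})\cong\lc^2_{\m}(F^n(I))$ is in the spirit of the paper, but your proof of existence in part (1) has a genuine gap: the hypotheses of the theorem are only $(S_2)$ and $\dim R\geq 3$, while the existence results you invoke for $k=2$ local cohomology do not apply in that generality. Theorem~\ref{extlimit} with $k=2$ requires $R$ to be $(S_{k+1})=(S_3)$ on the punctured spectrum (this is exactly what makes $\CH^{d-3}(D_R)$ and $\CH^{d-2}(D_R)$ have finite length, so that the $\Hom$-terms $E_2^{0,d-2}$ and $E_2^{0,d-3}$ of the spectral sequence can be controlled), and Corollary~\ref{highlc} requires $R$ Cohen--Macaulay or Cohen--Macaulay on the punctured spectrum. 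You assert that ``the $S_2$ / locally Cohen--Macaulay-on-$U$ condition is assumed,'' but only $(S_2)$ is assumed, and $(S_2)$ does not give either $(S_3)$ on $U$ or local Cohen--Macaulayness on $U$. (Your citations are also off: Theorem~\ref{pre1} is the hypersurface statement expressing $\fhk{M}$ via finite length modules, and \ref{locci} is a complete-intersection vanishing criterion, not a general existence theorem.) The paper avoids precisely this problem by never using the $k=2$ existence machinery: it writes $I=(a):(b)$, $I^{(q)}=(a^q):(b^q)$, and uses two local cohomology exact sequences to show that $\length(\lc^2_{\m}(I^{(q)}))$ differs from the honest degree-zero function $\length(\lc^0_{\m}(R/(a,b)^{[q]}))=\fhk{R/(a,b)}(n)$ by an error bounded by $\length(\lc^2_{\m}(R))$, which is finite under $(S_2)$; existence then comes from the $k=0$ results (which need only $(S_1)$ on the punctured spectrum). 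That colon-ideal reduction is the missing idea in your argument, and without it part (1) is unproven under the stated hypotheses.

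Part (2) of your proposal is essentially sound and is a legitimate variant of the paper's argument: since a complete intersection is Cohen--Macaulay, your identification $\lc^2_{\m}(I^{(q)})\cong\lc^2_{\m}(F^n(I))$ and the application of Theorem~\ref{locci} with $k=2$ (using $\depth I\geq 2$) are available there, and the step ``rank one reflexive of finite projective dimension implies principal'' is the same MacRae/Bourbaki-type fact the paper relies on; the paper instead passes through the Tor-rigidity statement \ref{hyprigid} applied to the colon presentation. Do note, however, that your part (2) inherits the identification chain from part (1), so you should spell out the facts you gloss over (that $I^{(q)}$ is reflexive, restricts to $\mathcal L^{\otimes q}$ on $U$, and that reflexive hulls are determined by restriction to $U$ because duals have depth at least $2$); the paper discharges the analogous points by citing the Proposition preceding Theorem~\ref{refl} and the results of \cite{DLM}.
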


Other aspects of the theory, such as the connections to tight closure theory and $F$-singularity of pairs, and  some computations of generalized Hilbert--Kunz limits, will be dealt with in forthcoming works (\cite{DS}, \cite{DW}). 
We also want to mention recent  papers that deal with some issues raised in the preprint version of this work. For example, \cite{BreCa} computes $\hk(M)$ over a $2$-dimensional graded normal domain using geometric techniques, while \cite{Vra} generalizes Proposition~\ref{prop:LC}.

\section{Preliminaries}\label{prelim}

The Frobenius endomorphism provides two important functors for modules over a commutative ring $R$.
Let $M$ be an $R$-module. 
First, we may consider the restriction of scalars from the iterated Frobenius map which is denoted as $\ps M$. 
The most important instance is the algebra $\ps R$, which, if $R$ is reduced, 
can be identified with the ring of $p^n$-roots $R^{1/p^n}$.
For any $R$-module $M$, $\ps M$ is naturally an $\ps R$-module, $\ps r \cdot \ps m = \ps rm$.
Being a restriction of scalars, $\ps M$ is an exact functor.

The second functor originates from  the base change along
the Frobenius endomorphism: $F_R^n (M)$ is an $R$-module such that
$\ps (F_R^n (M)) = M\otimes_R \ps R$ as $\ps R$-modules.
This functor is called the {\it Peskine--Szpiro functor} and was introduced by Peskine and Szpiro in \cite{PS2}.
The Peskine--Szpiro functor is right-exact and
the values of the derived functors $\Tor^R_i (M, \ps R)$ are similarly viewed as $R$-modules via the target of the base change map. 
Note that $F^n(R) \cong R$ and for cyclic modules
$F^n(R/I) \cong R/I^{[p^n]}$.

We will use $\length(M)$ and $\pd M$ to denote the
\emph{length} and \emph{projective dimension}, respectively,  of the module
$M$. We will slightly abuse notation and denote $\dim M = \dim \Supp M$. 
We use the notation $\ul x$ for a sequence of elements of $R$. We say that a ring $R$ is $F$-finite, if $\ps R$ is a finitely generated $R$-module for any (equivalently, all) $n$.

If $f,g \colon \mathbb{N} \to \mathbb{R}$ are sequences, we say that
$f = O(g)$ if there exists a constant $C$ such that $f(n) \leq Cg(n)$ for all $n$.
We say that $f = o(g)$ if $\lim\limits_{n\to \infty} f(n)/g(n) = 0$.


\begin{defn}\label{IPD} Let $M$ be an $R$-module. One defines the {\it infinite projective dimension locus} of $M$ as
\[\IPD(M)=\{\p \in \Spec R \mid \pd_{R_{\p}} M_{\p}=\8\}.\]

\end{defn}

The category of  modules $M$ with $\IPD(M) \inc \{\m\}$, the modules that have the finite projective dimension on the punctured spectrum,  contains all modules of finite lengths, modules of finite projective dimensions and is closed under taking extension, direct summand, kernel of epimorphism and cokernel of monomorphism.  In particular, when $R$ has an isolated singularity, all finitely generated modules satisfy this property.  

Now, we can introduce our main object of study.

\begin{defn}\label{keydef}
Let $M$ be a finitely generated module over $R$. One defines
\[\eu(M) = \limsup_{n \to \infty} \frac{\length(\lc^0_\m(F^n(M)))}{p^{n\dim R}}\]
and
\[\ed(M) = \liminf_{n \to \infty} \frac{\length(\lc^0_\m(F^n(M)))}{p^{n\dim R}}.\]
If $\eu(M)=\ed(M)$ we denote the same value by $\hk(M)$. We call $\fhk M(n) :=\length(\lc^0_\m(F^n(M))$ 
the {\it generalized Hilbert--Kunz function} of $M$.
\end{defn}
This can be seen as a special case of relative multiplicity defined by Epstein and Yao in~\cite{EY}.
In their notation $\eu(M) = u_M^+(0,M)$ and $\ed(M) = u_M^-(0,M)$.

\begin{rmk}\label{rPeskine-Szpiro}
Since $\ps M$ is an exact functor, $\ps (R/\m)$ is the residue field of $\ps R$ and
$\length_{\ps R} (\ps M) = \length_R (M)$ for any finite length module $M$.
Furthermore, using the definition of the local cohomology via the \v{C}ech complex
one may confirm that 
$\ps \lc^0_\m (M) \cong \lc^0_\m (\ps M)$ for any finitely generated module $M$.  
Combining these facts together we see that
\[
\length_R (\lc^0_\m(F^n(M))) = \length_{\ps R} (\lc^0_\m(M \otimes_R \ps R)),
\]
thus Definition~\ref{keydef} is equivalent to the definition given in Introduction.
\end{rmk}

In \cite{Monsky} Monsky defined and proved existence of Hilbert--Kunz multiplicity.
His method can be used for a variety of limits, the following theorem of Seibert (\cite {Seibert}) can be considered as the most general result that follows by Monsky's proof.

\begin{thm}\label{sei}
Let $(R, \m)$ be a local $F$-finite ring of characteristic $p > 0$ with a perfect residue field. 
Consider a family $\mathfrak C$ of finitely generated $R$-modules such that 
for any short exact sequence
\[\ses{M'}{M}{M''}\]
$M \in \mathfrak C$ if and only if $M', M'' \in \mathfrak C$.
Let $g$ be a function $\mathfrak C \to \mathbb R$ 
such that 
\begin{enumerate}
\item $g(M) \leq g(M') + g(M'')$ for any short exact sequence as above,
\item $g(M \oplus N) = g(M) + g(N)$ for any $M, N \in \mathfrak C$.
\end{enumerate}
Suppose that all modules in $\mathfrak C$ have dimension at most $j$, then
for any $M \in \mathfrak C$ there is a constant $c(M)$ such that  
\[g(\ps M) = c(M)q^j + O(q^{j-1}).\]
\end{thm}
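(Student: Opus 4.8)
The plan is to induct on the dimension bound $j$. For $j=0$ a finite length module $M$ has $\m^N M = 0$ for some $N$, and once $q \ge N$ the module ${}^{f^n}M$ is annihilated by $\m$ and has the same length as $M$ (restriction of scalars along $f$ preserves length, since the residue field is perfect, by reduction to the case $M = R/\m$); hence ${}^{f^n}M$ stabilizes up to isomorphism and $g({}^{f^n}M)$ is eventually constant. For the inductive step, first note that $\mathfrak C$ is closed under finite direct sums, submodules, quotients and extensions, and that restriction of scalars along $f$ is exact: a prime filtration of $M \in \mathfrak C$ then shows that each subquotient $R/\p_i$ lies in $\mathfrak C$, then that ${}^{f}(R/\p) \in \mathfrak C$ (its own prime filtration has subquotients $R/\q$ with $\q \supseteq \p$, which are quotients of $R/\p$), and finally that ${}^{f^n}M \in \mathfrak C$ for all $n$. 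Applying $g$ to the filtration $\{{}^{f^n}M_i\}$ and using subadditivity yields $g({}^{f^n}M) \le \sum_i g({}^{f^n}(R/\p_i))$, so by the inductive hypothesis applied over the rings $R/\p_i$ with $\dim R/\p_i < j$ (where the Frobenius and the restricted family are compatible), the problem reduces to understanding $g({}^{f^n}(R/\p))$ for $\dim R/\p = j$; passing to the domain $R/\p$ we may assume $R$ is an $F$-finite local domain with perfect residue field, $\dim R = j$, and $M = R$.

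The engine is then the identity ${}^{f^{n+1}}R \cong {}^{f^n}({}^fR)$. Since $R$ is an $F$-finite domain with perfect residue field of dimension $j$, one has $[\operatorname{Frac}(R):\operatorname{Frac}(R)^p] = p^{j}$, so the restriction of scalars ${}^fR$ is a torsion-free $R$-module of rank $p^{j}$; hence there are short exact sequences $0 \to R^{p^{j}} \to {}^fR \to C_1 \to 0$ and $0 \to {}^fR \to R^{p^{j}} \to C_2 \to 0$ with $C_1, C_2 \in \mathfrak C$ of dimension $< j$. Applying ${}^{f^n}$ and using subadditivity of $g$ on \emph{both} sequences, additivity of $g$ on direct sums, and the inductive bound $g({}^{f^n}C_i) = O(q^{j-1})$ uniform in $n$, one obtains
\[
g({}^{f^{n+1}}R) = p^{j}\, g({}^{f^n}R) + O(q^{j-1}).
\]
Writing $a_n := g({}^{f^n}R)$, this recursion forces $|a_{n+1}/p^{(n+1)j} - a_n/p^{nj}| = O(p^{-n})$, so $(a_n/p^{nj})$ is Cauchy; calling the limit $c(R)$ and summing the geometric tail gives $a_n = c(R)q^{j} + O(q^{j-1})$.

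Finally one returns to a general $M \in \mathfrak C$ of dimension $\le j$. The filtration bound already gives $g({}^{f^n}M) \le c(M)q^{j} + O(q^{j-1})$ with $c(M) = \sum_{\dim R/\p = j} \length_{R_\p}(M_\p)\, c(R/\p)$, and the matching lower bound is the remaining point. The key observation is that $\length_{R_\p}(({}^fM)_\p) = p^{j}\length_{R_\p}(M_\p)$ for every $\p$ with $\dim R/\p = j$ (again the perfect-residue-field computation), so ${}^fM$ and $M^{\oplus p^{j}}$ coincide away from a closed set of dimension $< j$; choosing short exact sequences with outer terms of dimension $< j$ that sandwich ${}^fM$ between $M^{\oplus p^{j}}$ and a direct sum of copies of the relevant $R/\p$'s, and absorbing those outer terms by the inductive hypothesis exactly as the $C_i$ were absorbed above, one recovers $g({}^{f^{n+1}}M) = p^{j}g({}^{f^n}M) + O(q^{j-1})$ for $M$ itself and concludes as in the domain case.

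I expect the main obstacle to be precisely this last comparison: because $g$ is merely subadditive, not additive, passing from the statement that ${}^fM$ and $M^{\oplus p^{j}}$ agree up to lower dimension to the statement that $g \circ {}^{f^n}$ of them agree up to $O(q^{j-1})$ demands genuine short exact sequences sandwiching each module between the other and an explicit direct sum of $R/\p$'s, and their construction over a non-domain ring and for modules with torsion requires careful bookkeeping with prime filtrations and localization at the top-dimensional primes. The remaining ingredients — the rank formula for ${}^fR$, the telescoping estimate, and the reduction to cyclic modules — are routine.
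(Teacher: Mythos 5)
The paper itself offers no proof of this statement (it is quoted from Seibert's paper), so your argument has to stand on its own. Most of it does, and it follows the expected d\'evissage: the base case $j=0$ (perfect residue field gives $\length({}^{f^n}M)=\length(M)$ and eventual annihilation by $\m$), the closure bookkeeping showing ${}^{f^n}M\in\mathfrak C$, the reduction of the leading term to prime cyclic modules, and, above all, the domain case: the rank of ${}^{f}R$ equals $[\operatorname{Frac}(R):\operatorname{Frac}(R)^p]=p^{j}$ (this is Kunz's formula $[\kappa(\p):\kappa(\p)^p]=[k:k^p]\,p^{\dim R/\p}$, not just perfectness of $k$, but it is correct), the two short exact sequences comparing ${}^{f}R$ with $R^{p^{j}}$, and the telescoping estimate are exactly the right engine and do give $g({}^{f^n}(R/\p))=c(R/\p)q^{j}+O(q^{j-1})$ for top-dimensional primes $\p$.

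The gap is the final step, and it is not a matter of bookkeeping. From $\length_{R_\p}(({}^{f}M)_\p)=p^{j}\length_{R_\p}(M_\p)$ at top-dimensional $\p$ you infer that ${}^{f}M$ and $M^{\oplus p^{j}}$ ``coincide away from a closed set of dimension $<j$''; this is false: equality of lengths over $R_\p$ does not give an isomorphism there, and in general the two modules are not locally isomorphic at the top-dimensional primes, nor do the sandwiching sequences you want exist. Concretely, take $R=k[[x,y]]$, $j=1$, $M=R/(x^2)$, $\p=(x)$: since $x^p\in(x^2)$, ${}^{f}M$ is killed by $x$ and $({}^{f}M)_\p\cong\kappa(\p)^{2p}$, while $(M^{\oplus p})_\p=(R_\p/x^2R_\p)^{\oplus p}$ is not killed by $x$; comparing socles and minimal numbers of generators over $R_\p$ shows there is no injection of either module into the other, and no surjection of either onto the other, whose error is supported in dimension $<1$ (the same local obstruction rules out comparing $M$ directly with $\bigoplus_\p (R/\p)^{m_\p}$). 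Since $g$ is only subadditive --- it bounds the middle of a short exact sequence by the ends, never an end by the middle alone --- a prime filtration yields only the upper bound $g({}^{f^n}M)\le\sum_\p\length_{R_\p}(M_\p)\,g({}^{f^n}(R/\p))+O(q^{j-1})$; the matching lower bound, which is the real content of the theorem, is precisely what your sandwich was meant to supply, and as written it is unproven. A repair needs a mechanism that places ${}^{f^n}M$ at an \emph{end} of exact sequences with understood middles: for instance, note that for $q\gg0$ the module ${}^{f^n}M$ is killed by $\sqrt{\Ann M}$, so one may replace $M$ by a module over the reduced ring $\bar R=R/\sqrt{\Ann M}$ (which lies in $\mathfrak C$); prove the lower bound for $\bar R$ itself by embedding $\bar R\hookrightarrow\bigoplus_i \bar R/\p_i$ (product over minimal primes) with cokernel of dimension $<j$; then present $M$ as a quotient of $\bar R^{\,b}$ and convert the filtration upper bound for the kernel into a lower bound for $M$, using that the expected leading coefficient $\sum_\p\length_{R_\p}(M_\p)\,c(R/\p)$ is additive on short exact sequences. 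Without an argument of this kind the proposal does not establish the theorem.
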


We will need a full version to deal with a spectral sequence argument in the next section.
However, it is often enough to consider the family of all finitely generated $R$-modules $\mo(R)$.

\begin{eg}\label{seiexample}
Let $i\geq 0$ be an integer and $N$ be a module such that $\Tor_i(N,X)$ (respectively $\Ext^i(N,X)$) has finite length for all $X\in \mo(R)$. Then $g(M) = \length(\Tor_i(N,M))$ (respectively $g(M) = \length(\Ext^i(N,M))$) satisfies the conditions of Theorem~\ref{sei}.
\end{eg}

We will also need the following celebrated result of Peskine and Szpiro 
(\cite[Th\'eor\`eme~1.7, Th\'eor\`eme~1.13]{PS2}). 

\begin{thm}\label{tPeskine-Szpiro}
Let $R$ be a Noetherian ring of characteristic $p > 0$.
Let $M$ be a finitely generated $R$-module of finite projective dimension. 
Then $\Tor_i^R (M, \ps R) = 0$ for all $n > 0$ and $i > 0$. Thus
$F^n(M)$ is a module of finite projective dimension, and for every prime ideal $\p$ of $R$
$\pd_{R_\p} (F(M))_\p = \pd_{R_\p} M_\p.$
In particular, if
\[
0 \to F_s \xrightarrow{\phi_s} F_{s-1} \to \ldots \to F_1 \xrightarrow{\phi_1} F_0
\]
is an exact complex of free modules then the complex
\[
0 \to F_s \xrightarrow{\phi_s^{[p^n]}} F_{s-1} \to \ldots \to F_1 \xrightarrow{\phi_1^{[p^n]}} F_0
\]
is exact, where the maps $\phi_i^{[p^n]}$ are obtained by raising the entries of the matrix defining 
$\phi_i$ to power $p^n$.
\end{thm}

The following immediate corollary will be used extensively. 

\begin{cor}\label{PScor}
Let $(R, \m)$ be a local ring of characteristic $p > 0$ and $M$ be a finitely generated $R$-module.
If $\IPD(M) \subseteq \{\m\}$, then $\Tor_i^R (M, \ps R)$ has finite length for all $i, n > 0$.
\end{cor}

We want to finish with a technical observation that will help us to reduce to a case where Theorem~\ref{sei} can be applied.
Namely, the construction allows us to pass, without changing the sequence $\fhk n(M)$, 
to a complete ring with a perfect residue field, which is F-finite by Corollary~2.6 of \cite{Kunz}.

\begin{prop}\label{complete prop}
Let $(R, \m, k)$ be a local ring of positive characteristic $p$ and $S = \hat R \cotimes k^{\infty}$, where $k^\infty$ is the perfect closure of a coefficient field $k$ of $\hat R$.
Then $S$ is a complete F-finite faithfully flat local $R$-algebra with a perfect residue field and the maximal ideal $\m S$.
Moreover, this extension has the following properties:
\begin{enumerate}
\item $\length_R(\lc^i_{\m} (F^n_R(M))) = \length_S(\lc^i_{\m S} (F^n_S(M \otimes_R S)))$;
\item if $\IPD(M) \subseteq \{\m\}$, then $\IPD(M \otimes_R S) \subseteq \{\m S\}$;
\item if $R$ is excellent and satisfies $(S_n)$ on the punctured spectrum (or on the whole spectrum)
$S$ will satisfy the same property;
\item if $R$ is excellent and equidimensional, $S$ is also equidimensional.
\end{enumerate}
\end{prop}
\begin{proof}
Because completion is flat and the the tensor product is taken over a field, $S$ is flat $R$-algebra. 
Moreover, $S$ is local as the complete tensor product of local $k$-algebras and its residue field 
can be computed as 
$
S/\m S \cong k^\infty \cotimes R \otimes_R R/\m \cong k^\infty \cotimes k \cong k^\infty.
$

Due to flatness of $S$ for any $R$-module $M$ we have an isomorphism
$\lc^i_{\m} (M) \otimes_R S \cong \lc^i_{\m S} (M\otimes_R S)$.
Moreover, one also sees that
\[F^n_R(M) \otimes_R S = M \otimes_R \ps R \otimes_R S \cong M \otimes_R \ps S = 
(M \otimes_R S) \otimes_S \ps S = F^n_S (M \otimes_R S).\]
Therefore $\lc^i_{\m} (F^n_R(M))\otimes_R S \cong \lc^i_{\m S} (F^n_S(M \otimes_R S))$ as $S$-modules. 
Since $\m S$ is the maximal ideal of $S$ and $S$ is flat,  
$\length_R (K) = \length_S (K \otimes_R S)$ for any finite length $R$-module $K$, so
\[
\length_R \left(\lc^i_{\m} (F^n_R(M)) \right) = \length_S \left(\lc^i_{\m} (F^n_R(M)) \otimes_R S \right)
= \length_S \left (\lc^i_{\m S} (F^n_S(M \otimes_R S)) \right).
\]

Now, let $\mathfrak q$ be a prime ideal of $S$ and let $\p = \mathfrak q \cap R$.
Then $R_\p \to S_{\mathfrak q}$ is faithfully flat, so, if $\pd M_\p < \infty$ and $F$ is a finite free resolution of $M_\p$, 
then $F \otimes_{R_\p} S_{\mathfrak q}$ is a finite free resolution of $M_\p \otimes_{R_\p} S_{\mathfrak q}$.

The third assertion follows from \cite[Theorem 4.5]{AF} by Avramov and Foxby, since 
an excellent ring has regular formal fibers.

Last, we observe that if $R$ is excellent and equidimensional, then $\hat{R}$ is also equidimensional. 
Furthermore, by Cohen's structure theorem $\hat{R}$ is a homomorphic image of a power series ring $A = k[[x_1, \ldots, x_d]]$,
say $\hat{R} = A/I$. In this case, by the construction, $S = B/IB$ for $B = k^\infty[[x_1, \ldots, x_d]]$.  
An old result of Chevalley (\cite[Prop. 9]{Chev}) states that $\dim A/P = \dim B/Q$
for any prime $P$ of $A$ and any minimal prime ideal $Q$ of $PB$.
Now, the claim follows by applying the Chevalley's result to the minimal primes of $I$.
\end{proof}

\section{Existence of limits}

Our first result allows us to pass from $\fhk M(n)$ to a sequence for which we will be able to use
Seibert's theorem. The proof is based on the Grothendieck local duality (\cite[Corollary~6.3]{Hartshorne}) which asserts 
that $\lc^i_{\m}(M) \cong \Hom_R (\CH^{d-i}(\Hom(M,D_R)), E(k))$,
where $R$ is a local ring, 
$D_R$ is the normalized dualizing complex of a local ring $R$, $E(k)$ is the injective hull of the residue field,  
and $M$ is a finitely generated module.
We should remark that a complete ring always admits a dualizing complex, since it is an image of a regular local ring.
We will also use that if $R$ is equidimensional and catenary, then the dualizing complex localizes, so
$(\CH^n(D_R))_\p \cong \CH^{n}(D_{R_\p}).$
We refer the reader to \cite{Schenzel} for the background on dualizing modules
and \cite{Hartshorne, Weibel} for the background on derived functors. 

The idea of the proof originates from the Cohen-Macaulay case, 
where the assumption $\IPD(M) \subseteq \{\m\}$ gives that
$
\Ext^d_R (F^n(M), \omega_R) \cong \Ext^d_R (M, \ps \omega_R),
$
and the latter comes from a functor suitable for Seibert's theorem.

Before we start the proof, we record a technical lemma based on the derived Hom-tensor adjunction.

\begin{lemma}\label{derived lemma}
Let $(R, \m)$ be a $F$-finite local ring of dimension $d$ with the dualizing complex $D_R$.
Let $M$ be a finitely generated $R$-module such that $\IPD (M) \subseteq \{\m\}$
and let $F$ be its free resolution.
Then for every $0 \leq k \leq d$ we have the isomorphism
\[
\CH^{k} \big( \Hom_R (M \otimes_R \ps R, D_R) \big) 
\cong \CH^{k}(F^*\otimes_R D_{\ps R})\big.
\]
\end{lemma}
\begin{proof}
Let $S = \ps R$.  We are going to compute $\CH \big (\rh(M \dtor_R S, D_R)\big )$ via 
the hypercohomology spectral sequence of a double complex (\cite[5.6]{Weibel}):
\[
E^{pq}_2 = \CH^p \left( \Hom_R (\CH^{-q}(M\dtor_R S), D_R) \right) 
\Longrightarrow  \CH^{p+q} \left( \rh (M\dtor_R S, D_R) \right).
\]
Because $M$ has finite projective dimension on the punctured spectrum, 
Corollary~\ref{PScor} shows that $\CH^{-q}(M\dtor_R S) = \Tor_q^R (M, S)$ has finite length for any $q > 0$
and  by the local duality
\[
\CH^{p} \big( \Hom_R (\CH^{-q}(M\dtor_R S), D_R) \big) \cong  \Hom_R  \left(\lc^{d-p}_{\m} \left (\Tor_{q}^R(M, S) \right), E(k) \right)= 0.
\]
Hence the spectral sequence collapses and shows the isomorphism
\[
\CH^{k} \big( \Hom_R (M\otimes_R S, D_R) \big) \cong 
\CH^{k} \big( \rh(M\dtor_R S, D_R) \big).
\]
Since $S$ is finite over $R$, by the derived Hom-tensor adjunction (\cite[Proposition~5.15]{Hartshorne})
\[
\rh(M\dtor_R S, D_R) \cong \rh(M,\rh(S,D_R)) \cong \rh(M,D_S).
\] 
Now, by comparing complexes $\Hom_R (F, D_S)$ and $F^* \otimes_R D_S$ one can get 
the isomorphism
\[
\CH^{k} (\rh(M,D_S)) \cong \CH^{k}(\Hom_R(F, D_S)) \cong \CH^{k}(F^*\otimes_R D_S).
\]
Thus, after combining the steps, we have proved that
\[
\CH^{k} \big( \rh(M\dtor_R S, D_R) \big) \cong \CH^{k} (\rh(M,D_S))
\cong \CH^{k}(F^*\otimes_R D_S).
\]
\end{proof}

Another auxiliary lemma uses Seibert's theorem to analyze limits involving the homology of the dualizing complex.
For convenience, the depth of the zero module is taken to be $\infty$. 

\begin{lemma}\label{limits are zero}
Let $(R, \m)$ be an equidimensional $F$-finite local ring of dimension $d > 0$
and $M$ be a finitely generated $R$-module such that $\IPD(M) \inc \{\m\}$.
For an integer $k$,  $0 \leq k < d$ suppose that $R$ satisfies $(S_{k + 1})$ on the punctured spectrum
and for all prime ideals $\p \neq \m$ at least one of the following conditions holds:
\begin{enumerate}
\item $R_\p$ is Cohen-Macaulay, 
\item $M_\p$ is free, 
\item or $\depth M_\p \geq \dim R_\p - d + k + 2$.
\end{enumerate}
Then for any integers $j > 0, i \geq 0$ such that $i + j\geq d - k - 1$
\[
\lim_{n \to \infty} \frac{\length (\Ext_R^{i}(M, \ps \CH^j(D_R)))}{p^{nd}} = 0.
\]  
\end{lemma}
\begin{proof}
Let $C_i = \{ X \in \mo(R) \mid \Supp X \subseteq \Supp \CH^{i}(D_R) \}$.
We claim that the assumptions imposed on the primes $\p \neq \m$ guarantee that 
$\Ext^{j}_R (M, X)$ has finite length for all $i, j > 0$ such that $i + j \geq d-k-1$ and all $X \in C_i$.
Namely, we will show the following.

\begin{claim}
For every prime $\p \neq \m$ either $\CH^{i}(D_{R})_\p = 0$ or $\pd M_\p < j$.
\end{claim}
\begin{proof}[Proof of the claim]
The claim is clear if we assume that $M_\p$ is free.

Since $R$ is equidimensional, $\CH^{i}(D_{R})_\p = \CH^{i}(D_{R_\p})$, 
and, by local duality, $\CH^{i}(D_{R_\p})$ is dual to $\lc^{\dim R_{\p}-i}_\p (R_\p)$. 
Hence $\CH^{i}(D_{R})_\p = 0$ whenever $\depth R_\p > \dim R_\p - i$.
In particular, if $R_\p$ is Cohen-Macaulay, then $\CH^{i}(D_{R_\p}) = 0$ for all $i > 0$ and all $j$.

Last, assume that $\depth M_\p \geq \dim R_\p - d + k + 2$. 
Then by the Auslander--Buchsbaum formula
\[
\pd M_\p = \depth R_\p - \depth M_\p \leq \depth R_\p - \dim R_\p + d - k - 2.  
\]
Therefore, if $j  \leq  \pd M_\p$, then 
\[
i \geq d - k - 1 - j \geq d - k - 1 - (\depth R_\p - \dim R_\p + d - k - 2) = \dim R_\p - \depth R_\p + 1.
\]
And, as explained above, this implies that $\CH^{i}(D_{R})_\p = 0$.
\end{proof}

If  $i,j > 0$ and $i + j \geq d - k -1$, then the claim shows that we may apply Theorem~\ref{sei} to the family $C_i$ and the function $g(X) = \length (\Ext^j_R (M, X))$. 
Since $\dim \Supp \CH^i(D_R) \leq d - i$, Seibert's theorem shows that
\[\lim_{q \to \infty} 
\frac{\length \left(\Ext^j_R (M, \ps \CH^{i}(D_R))\right)}{p^{nd}} = 0
\]
for any $i> 0$. 

Now, it is left to analyze the modules $\Hom_R (M, \ps \CH^{d-k}(D_R))$.
Since $R$ satisfies $(S_{k+1})$ on the punctured spectrum, 
$\lc^{m}_\p (R_\p) = 0$ for all $m \leq \min (k, \dim R_\p - 1)$, so
$\CH^{j}(D_R)_\p = 0$ if $j = \dim R_\p - m \geq \max (1, \dim R_\p - k)$.
Therefore, if $j \geq \max (d- k-1, 1)$, then 
$\CH^{j}(D_R)$ has finite length
and we may apply Seibert's theorem to the function $\Hom (M, X)$ on $C_j$. 
It follows from the theorem that for all such $j$
\[ 
\lim_{q \to \infty} 
\frac{\length \left(\Hom_R (M, \ps \CH^{j}(D_R))\right)}{p^{nd}} = 0. 
\] 
\end{proof}

\begin{thm}\label{extlimit}
In the assumptions of Lemma~\ref{limits are zero}
\[
\lim_{n \to \infty} \frac{1}{p^{nd}}
\left(\length \left(\lc^k_{\m}(F^n(M))\right)- \length \left (\Ext_R^{d-k}(M, \ps \CH^0(D_R)) \right)\right) = 0.
\]  
In particular, if $\pd_{R_\p} M_\p < d - k$ on the punctured spectrum, then 
$\lim\limits_{n \to \infty} \frac{\length (\lc^k_{\m}(F^n(M)))}{p^{nd}}$ exists.
\end{thm}
\begin{proof}
For the second assertion, we note that the additional assumption on $M$ implies that 
$\Ext^{d-k}_R (M, X)$ has finite length for any finite module $X$.
Hence 
\[\lim_{n \to \infty} \frac{\length_R \left(\Ext^{d-k}_R (M, \ps \CH^0(D_R) ) \right)}{p^{nd}}\]
exists by Theorem~\ref{sei} and the second assertion follows from the first.

By \cite[Theorem~2.5]{Kunz} an F-finite ring is excellent,
so by Proposition~\ref{complete prop} we may assume that $R$ is complete and has a perfect residue field. 
By the local duality and Lemma~\ref{derived lemma}, we have 
\[
\lc^k_{\m}(M\otimes_R \ps R) \cong \big( \CH^{d-k} \big( \Hom_R(M \otimes_R \ps R, D_R) \big) \big)^\vee 
\cong \big (\CH^{d-k}(F^*\otimes_R D_{\ps R})\big )^\vee.
\]
Now, we will compute $\length \left (\CH^{d-k}(F^*\otimes_R D_{\ps R}) \right )$ 
using the standard spectral sequence 
\[E^{pq}_2 = \CH^{p}(F^*\otimes_R \CH^q(D_{\ps R})) \Longrightarrow  \CH^{p+q}(F^*\otimes_R D_{\ps R}).\]
Since $D_{\ps R} = \ps D_R$  and since the restriction of scalars is exact, 
$\CH^{q}(D_{\ps R}) = \ps \CH^{q}(D_R)$. Hence $E^{p,q}_2 = \Ext^p_R (M, \ps \CH^q (D_R))$.


We claim that for any $0 < i \leq d-k$
the contribution of $E^{d-k-i, i}_\8$ to $\CH^{d-k}(F^*\otimes D_{\ps R})$ is $o(p^{nd})$. 
First, observe that 
the entry $E^{d-k-i, i}_{s+1}$ is a quotient of a submodule of $E^{d-k-i, i}_s$, 
so 
$\length(E^{d-k-i, i}_{s+1}) \leq \length(E^{d-k-i, i}_{s}) \leq \length(E^{d-k-i, i}_2)$.
But by Lemma~\ref{limits are zero}
\[\lim_{q \to \infty} \frac{\length(E^{d-k-i, i}_2)}{p^{nd}} = 
\lim_{q \to \infty} \frac{\length(\Ext_R^{d-k-i} (M, \ps\CH^{i}(D_R)))}{p^{nd}} = 0.\]

We also need to estimate the contribution of $E^{d-k,0}_\infty$.
Because for any $s \geq 2$ the map on the $s$th sheet from $E^{d-k,0}_s$ is zero,
$E^{d-k,0}_{s+1}$ is the cokernel of $E^{d-k - s,s - 1}_{s} \to E^{d-k,0}_{s}$.
Thus
\[\length(E^{d-k,0}_{s}) \geq 
\length(E^{d-k,0}_{s+1}) \geq \length(E^{d-k,0}_{s}) - \length(E^{d-k - s,s - 1}_{s})
\geq \length(E^{d-k,0}_{s}) - \length(E^{d-k - s,s - 1}_{2}).\]
Since our spectral sequence converges after $d - k$ steps, repeatedly applying the inequalities above we obtain  that
\[
\length(E^{d-k,0}_{2}) \geq 
\length(E^{d-k,0}_{\infty}) \geq   
\length(E^{d-k,0}_{2}) - \sum_{s = 2}^{d-k} \length(E^{d-k - s,s - 1}_{2}), \text { so }
\]
\[
\sum_{s = 2}^{d-k} \length \left(\Ext^{d-k - s}_R (M, \ps \CH^{s - 1}(D_R))\right) \geq 
\length \left(\Ext^{d - k}_R(M, \ps \CH^0(D_R))\right) - \length\left(E^{d-k,0}_{\infty}\right) \geq 0.
\]
However, $(d -k - s) + (s - 1) = d - k - 1$, so by Lemma~\ref{limits are zero}
the left-hand side is $o(p^{nd})$.
Therefore, 
\[
\lim_{n \to \infty} \frac{1}{p^{nd}}
\left( \length \left (\CH^{d-k}(F^*\otimes_R D_{\ps R}) \right ) - \length \left (\Ext_R^{d-k}(M, \ps \CH^0(D_R)) \right) \right ) = 0.
\]  
Now the theorem follows, since by Remark~\ref{rPeskine-Szpiro} if the residue field is perfect
\[\length_R (\lc^k_{\m}(M\otimes_R \ps R)) = \length_{\ps R} (\lc^k_{\m}(M\otimes_R \ps R)) = \length_R (\lc^k_{\m} (F^n(M) )).\]
\end{proof}

\begin{cor}\label{highlc}
Let $(R, \m)$ be an equidimensional local ring of dimension $d$.
Suppose that either $R$ is Cohen-Macaulay or excellent and Cohen-Macaulay on the punctured spectrum. 
Let $M$ be a finitely generated $R$-module and $0 \leq k < d$ be an integer  such that
$\pd_{R_\p} M_\p < d - k$ on the punctured spectrum, then
\[\lim_{n \to \infty} \frac{\length_R \left(\lc^k_{\m} (F^n(M))\right)}{p^{nd}}\]
exists.

In particular, if $M$ is locally free on the punctured spectrum,
the limit above exists for all $k < d$.
\end{cor}
\begin{proof}
The Artinian case follows from Seibert's theorem, so assume $d > 0$.
Using the construction in Proposition~\ref{complete prop}, we may assume that $R$ is $F$-finite without affecting relevant issues,
so we can apply the theorem.
\end{proof}

When we are interested in $\lc^0_\m(F^n(M))$, we may relax the assumptions. 

\begin{cor}\label{34}
Let $(R, \m)$ be an equidimensional excellent local ring that satisfies $(S_1)$ on the punctured spectrum and 
$M$ be a finite $R$-module such that $\IPD (M) \inc \{\m\}$.
Assume that for any prime $\p \neq \m$   
either $R_\p$ is Cohen-Macaulay or $M_{\p}$ is free.
Then $\hk(M)$ exists. 

In particular, $\hk(M)$ exists if either 
\begin{enumerate}
\item $\dim M \leq 1$, or 
\item $M$ is locally free on the punctured spectrum.
\end{enumerate}
\end{cor}
\begin{proof}
The first part follows from Theorem~\ref{extlimit}.
If $\dim M = 1$, then for any prime ideal $\p$ of dimension one, 
$M_\p$ is a finite length module of finite projective dimension, so 
the Peskine--Szpiro Intersection Theorem (\cite{PS1}) implies that $R_\p$ is Cohen-Macaulay.
\end{proof}

Because we only need to control the ($d-k$)th diagonal of the spectral sequence, 
an upper bound exists in a more general situation.

\begin{thm}
Let $(R, \m)$ be an equidimensional $F$-finite local ring of dimension $d > 0$
and $M$ be a finitely generated $R$-module such that $\IPD(M) \inc \{\m\}$.
Let $k \geq 0$ be an integer and 
assume that for all prime ideals $\p \neq \m$  one of the following holds:
$R_\p$ is Cohen-Macaulay, $M_\p$ is free, or $\depth M_\p \geq \dim R_\p - d + k + 1$.
Then, if $R$ satisfies $(S_{k})$ and $\pd M_\p < d-k$ on the punctured spectrum,
\[\length \left(\lc^k_{\m}(F^n(M))\right) = O(p^{nd}).\]
\end{thm}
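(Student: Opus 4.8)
The plan is to follow the proof of Theorem~\ref{extlimit} closely, the one simplification being that for a mere upper bound we never need to analyze the differentials of the second spectral sequence. Indeed, in any cohomological spectral sequence one has $\length(E^{p,q}_\infty)\le\length(E^{p,q}_2)$ for free, so it is enough to control the terms on the diagonal $p+q=d-k$ of the $E_2$-page, and we may ignore the diagonal $p+q=d-k-1$ which feeds into $E^{d-k,0}$ via the differentials. This is exactly what lets us weaken $(S_{k+1})$ to $(S_k)$ and lower the depth inequality by one: the modules we must show have finite length now sit on $p+q=d-k$ instead of on $p+q=d-k-1$.

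First I would invoke Remark~\ref{complete} to reduce to the case where $R$ is complete, $F$-finite with perfect residue field; all the hypotheses of the theorem are preserved by this reduction. Next I would dispose of the case $k\ge d$: then $\pd M_\p<d-k$ forces $M$ to be supported only at $\m$, so $\dim F^n(M)=0<k$ and $\lc^k_\m(F^n(M))=0$; thus assume $0\le k<d$. Writing $S=\ps R$ and letting $F$ be a free resolution of $M$, the first spectral sequence of the proof of Theorem~\ref{extlimit} collapses because $\Tor^R_i(M,S)$ has finite length for $i>0$ (Peskine--Szpiro, using $\IPD(M)\inc\{\m\}$), which gives $\lc^k_\m(F^n(M))\cong\CH^{d-k}(F^*\otimes_R D_S)$; then the second spectral sequence $E_2^{p,q}=\Ext^p_R(M,\ps\CH^q(D_R))\Longrightarrow\CH^{p+q}(F^*\otimes_R D_S)$ yields a finite filtration of $\lc^k_\m(F^n(M))$ whose graded pieces are the $E^{d-k-i,\,i}_\infty$, $0\le i\le d-k$, each a subquotient of $E^{d-k-i,\,i}_2=\Ext^{d-k-i}_R(M,\ps\CH^i(D_R))$. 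Hence
\[
\length\bigl(\lc^k_\m(F^n(M))\bigr)\ \le\ \sum_{i=0}^{d-k}\length\bigl(\Ext^{d-k-i}_R(M,\ps\CH^i(D_R))\bigr),
\]
and it suffices to show the $i=0$ term is $O(q^d)$ and each term with $i\ge1$ is $o(q^d)$.

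For the $i=0$ term, the hypothesis $\pd M_\p<d-k$ on the punctured spectrum makes $\length(\Ext^{d-k}_R(M,X))$ finite for every $X\in\mo(R)$, so Theorem~\ref{sei} applied with $j=d$ gives $\length(\Ext^{d-k}_R(M,\ps\CH^0(D_R)))=c\,q^d+O(q^{d-1})=O(q^d)$. For $1\le i\le d-k$ I would first check that $\Ext^{d-k-i}_R(M,\ps\CH^i(D_R))$ has finite length. When $0<i<d-k$ this amounts to: for every $\p\ne\m$, either $\CH^i(D_{R_\p})=0$ or $\pd M_\p<d-k-i$ --- which the trichotomy delivers exactly as in Theorem~\ref{extlimit}. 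If $R_\p$ is Cohen--Macaulay then $\CH^i(D_{R_\p})=0$ for all $i>0$; if $M_\p$ is free then $\Ext^{>0}_{R_\p}(M_\p,-)=0$; and if $\depth M_\p\ge\dim R_\p-d+k+1$ while $\CH^i(D_{R_\p})\ne0$, then local duality forces $i\le\dim R_\p-\depth R_\p$, whence by Auslander--Buchsbaum $\pd M_\p=\depth R_\p-\depth M_\p\le d-k-i-1<d-k-i$. The case $i=d-k$ is $\Hom_R(M,\ps\CH^{d-k}(D_R))$, and here $\CH^{d-k}(D_R)$ has finite length because $R$ is $(S_k)$ on the punctured spectrum and $k<d$ (as in the proof of Theorem~\ref{extlimit}). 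Since $\dim\CH^i(D_R)\le d-i<d$ for every $i\ge1$, Theorem~\ref{sei} then gives $\length(\Ext^{d-k-i}_R(M,\ps\CH^i(D_R)))=o(q^d)$, and summing the displayed inequality yields $\length(\lc^k_\m(F^n(M)))=O(q^d)+o(q^d)=O(q^d)$. (This argument incidentally shows $\lc^k_\m(F^n(M))$ is of finite length, as the statement presupposes.)

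The step I expect to be the main obstacle is the finite-length bookkeeping just sketched: verifying that $(S_k)$ on the punctured spectrum, $\pd M_\p<d-k$, and the Cohen--Macaulay/free/depth trichotomy together push every $\Ext$-module on the diagonal $p+q=d-k$ into the range where Seibert's theorem applies. This is a one-step shift of the corresponding computation in the proof of Theorem~\ref{extlimit}, and the unit saved in both the Serre condition and the depth inequality is due precisely to the fact that, for an upper bound, $\length(E^{d-k,0}_\infty)\le\length(E^{d-k,0}_2)$ is all that is needed, so the differentials into $E^{d-k,0}$ --- and with them the entire diagonal $p+q=d-k-1$ --- never enter the argument.
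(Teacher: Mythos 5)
Your proposal is correct and follows essentially the same route as the paper: reuse the two spectral sequences from Theorem~\ref{extlimit}, observe that for a mere upper bound only $\length(E_\infty^{p,q})\le\length(E_2^{p,q})$ along the diagonal $p+q=d-k$ is needed (so the differentials into $E^{d-k,0}$ never enter), and check via the weakened hypotheses that each diagonal $\Ext$-term has finite length so Seibert's theorem gives $O(q^d)$ for the $i=0$ term and $o(q^d)$ for the rest. Your verification of the trichotomy and of the $(S_k)$ condition forcing $\CH^{d-k}(D_R)$ to have finite length is exactly the bookkeeping the paper leaves implicit.
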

\begin{proof}
We follow the proof of Theorem~\ref{extlimit} for $k - 1$. 
Lemma~\ref{limits are zero} is still applicable, and we only 
need to estimate the contribution of $E^{d-k,0}_\infty$. 
Since 
\[
\length \left(\Ext^{d - k}_R(M, \ps \CH^0(D_R))\right) = \length(E^{d-k,0}_{2}) \geq  \length(E^{d-k,0}_{\infty}),
\]
we just estimate that the left-hand side is $O(p^{nd})$ by Theorem~\ref{sei}.
The theorem's assumptions are satisfied because $\pd M_\p < d-k$ on the punctured spectrum, 
so $\Ext^{d-k}_{R_\p} (M_\p, X) = 0$ for all $X$ and all $\p \neq \m$.
\end{proof}

It follows that an upper bound for the generalized Hilbert--Kunz function exists quite generally.

\begin{cor}\label{cor bound}
Let $(R, \m)$ be a formally equidimensional local ring.
Then $\eu(M)$ is finite for any finite $R$-module $M$ such that $\IPD(M) \inc \{\m\}$.
\end{cor}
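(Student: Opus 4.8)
The plan is to deduce the statement from the preceding theorem (the one asserting $\length(\lc^k_\m(F^n(M))) = O(q^d)$ under its hypotheses) applied with $k=0$, after first disposing of the zero-dimensional case by hand. If $\dim R = 0$ then $R$ is Artinian, so $\lc^0_\m(F^n(M)) = F^n(M)$; since the Frobenius functor is right exact and $F^n(R^{\mu})\cong R^{\mu}$, applying $F^n$ to a free presentation of $M$ exhibits $F^n(M)$ as a quotient of $R^{\mu(M)}$, where $\mu(M)$ is the minimal number of generators of $M$. Hence $\length(F^n(M)) \le \mu(M)\length(R)$ for every $n$, and $\eu(M) \le \mu(M)\length(R) < \infty$. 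So from now on assume $d := \dim R > 0$.

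Next I would normalize the ring using Remark~\ref{complete}, replacing $R$ by the faithfully flat extension $S = \hat R \otimes_{k[[T_1,\ldots,T_d]]} k^{\infty}[[T_1,\ldots,T_d]]$, which is complete, $F$-finite, of dimension $d$, and has perfect residue field. By parts (1) and (2) of that remark this leaves $\length(\lc^0_\m(F^n(M)))$ unchanged for all $n$ (hence leaves $\eu$ unchanged) and preserves the hypothesis $\IPD(M)\subseteq\{\m\}$. The only point needing attention is equidimensionality: part (4) of Remark~\ref{complete} is stated for excellent $R$, whereas here we only know that $R$ is \emph{formally equidimensional}, i.e.\ that $\hat R$ is equidimensional. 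But that is exactly what is needed, since $\hat R \to S$ is faithfully flat with Artinian fibers (as verified in the proof of Remark~\ref{complete}), and such extensions preserve equidimensionality. Thus we may assume $R$ is complete, $F$-finite, equidimensional of dimension $d>0$, with perfect residue field, and $\IPD(M)\subseteq\{\m\}$.

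It then remains to check that, for $k=0$, all hypotheses of the preceding theorem hold automatically in this situation. Being complete, $R$ is catenary, and together with equidimensionality this gives $\dim R_\p + \dim R/\p = d$ for every prime $\p$; in particular $\dim R_\p \le d-1$ whenever $\p\neq\m$. Consequently $\dim R_\p - d + 1 \le 0$, so the alternative ``$\depth M_\p \ge \dim R_\p - d + 1$'' holds for every $\p\neq\m$ (a nonzero finite module has nonnegative depth, and the zero module has depth $\infty$); the condition $(S_0)$ is vacuous; and by the Auslander--Buchsbaum formula, using $\pd M_\p < \infty$ on the punctured spectrum, we get $\pd M_\p = \depth R_\p - \depth M_\p \le \dim R_\p \le d-1 < d$ there. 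The preceding theorem therefore applies and yields $\length(\lc^0_\m(F^n(M))) = O(q^d)$, which is precisely the finiteness of $\eu(M)$.

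I do not expect a genuine obstacle here: the whole weight of the argument is already carried by the preceding theorem. The only two things requiring a moment of care are (a) observing that every hypothesis of that theorem collapses to a triviality when $k=0$, which rests on the identity $\dim R_\p + \dim R/\p = d$ for equidimensional catenary local rings, and (b) checking that the reduction of Remark~\ref{complete} goes through under the mere assumption ``formally equidimensional'', which amounts to noting that $\hat R$ is then equidimensional by definition and that faithfully flat maps with Artinian fibers preserve equidimensionality.
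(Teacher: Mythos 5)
Your proposal is correct and follows essentially the same route as the paper: reduce via Remark~\ref{complete} (where formal equidimensionality is exactly what makes the passage to $\hat R$ work without excellence) and then apply the preceding $O(q^d)$ theorem with $k=0$, observing that the depth condition $\depth M_\p \geq \dim R_\p - d + 1$ is automatic on the punctured spectrum, $(S_0)$ is vacuous, and $\pd M_\p < d$ follows from Auslander--Buchsbaum. The only difference is that you spell out the zero-dimensional case and the verification of $\pd M_\p < d$, which the paper leaves implicit.
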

\begin{proof}
We just note that $\depth M_\p \geq \dim R_\p - d  + 1$ is trivially true on the punctured spectrum. 
Thus there is no need in extra assumptions, in particular, since we do not need to preserve the Cohen-Macaulay locus.
\end{proof}

\subsection{Applications.}
We are able to deduce existence of limits for other functors.

\begin{lemma}\label{torsyz}
Let $(R, \m)$ be a formally equidimensional local ring of positive depth.
Then 
\[\lc^0_\m(\Tor_1^R(M, \ps R)) \cong \lc^0_\m (F^n(\syz M)).\]
\end{lemma}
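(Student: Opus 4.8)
The plan is to take a short exact sequence presenting the first syzygy, apply the Frobenius functor $F^n$, and compare the associated long exact sequence in local cohomology with the $\Tor$ term. Concretely, choose a presentation $\ses{\syz M}{R^b}{M}$ where $R^b$ is a free module mapping onto $M$. Applying $F^n = - \otimes_R \ps R$ yields the exact sequence
\[
0 \to \Tor_1^R(M, \ps R) \to F^n(\syz M) \to R^b \to F^n(M) \to 0,
\]
using $F^n(R^b) \cong R^b$ and the vanishing $\Tor_1^R(R^b, \ps R) = 0$. Breaking this into two short exact sequences, set $K = \im(F^n(\syz M) \to R^b)$, so that we have $\ses{\Tor_1^R(M, \ps R)}{F^n(\syz M)}{K}$ and $\ses{K}{R^b}{F^n(M)}$.

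Next I would extract the $\lc^0_\m$ information. Since $R$ has positive depth, $\lc^0_\m(R^b) = 0$, and from the second short exact sequence the long exact sequence in local cohomology gives $\lc^0_\m(K) \hookrightarrow \lc^0_\m(R^b) = 0$, hence $\lc^0_\m(K) = 0$. Now apply $\lc^0_\m(-)$ to the first short exact sequence: the long exact sequence reads
\[
0 \to \lc^0_\m(\Tor_1^R(M, \ps R)) \to \lc^0_\m(F^n(\syz M)) \to \lc^0_\m(K) = 0,
\]
which yields the desired isomorphism $\lc^0_\m(\Tor_1^R(M, \ps R)) \cong \lc^0_\m(F^n(\syz M))$.

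The only genuine point requiring care — and the step I expect to be the main obstacle — is justifying that $\Tor_1^R(M, \ps R)$ is the kernel of $F^n(\syz M) \to R^b$ rather than something one has to identify up to an extension, i.e. verifying that the four-term sequence above is exactly what the Frobenius functor produces from $\ses{\syz M}{R^b}{M}$; this is just the standard computation of $\Tor$ from the free module $R^b$ (whose higher Frobenius Tor vanishes), but one must be mindful that the "formally equidimensional" hypothesis plays no role here beyond what is inherited by the ambient setup, and that positive depth is used precisely once, to kill $\lc^0_\m(R^b)$. I do not see any subtlety in the local cohomology long exact sequences themselves, so beyond bookkeeping the argument is routine.
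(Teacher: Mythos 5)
Your proposal is correct and matches the paper's own argument essentially verbatim: both take the presentation $\ses{\syz M}{F}{M}$, apply $F^n$ to get the four-term exact sequence beginning with $\Tor_1^R(M,\ps R)$, split off $K=\im f$, and use $\depth R>0$ to kill $\lc^0_\m$ of the free module and hence of $K$. Your closing observation is also accurate: positive depth is used exactly once, and formal equidimensionality is not needed in the proof of this lemma itself.
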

\begin{proof}
From a long exact sequence for tensor product
\[0 \to \Tor_1^R (M, \ps R) \to F^n(\syz M) \to F^n(F) \to F^n(M) \to 0\]
we obtain the exact sequence of local cohomology:
\[0 \to \lc^0_\m(\Tor_1^R (M, \ps R)) \to \lc^0_\m(F^n(\syz M)) \to \lc^0_\m (F^n(F)).\]
Note that $F^n(F) = F$, so $\lc^0_\m (F^n(F)) = 0$ and the lemma follows. 
\end{proof}

\begin{cor}\label{torcm}
Let $(R, \m)$ be a formally equidimensional local ring of positive depth and 
$M$ be a finitely generated $R$-module such that $\IPD(M) \inc \{\m\}$.
Then $\length(\Tor_i^R(M, \ps R)) = O(p^{nd})$. 

Moreover, if $R$ is Cohen-Macaulay, then
\[\lim_{q \to \infty} \frac{\length (\Tor_i^R(M, \ps R))}{p^{nd}}\]
exists for any $i$ and is equal to $\hk (\syz^i M)$.
\end{cor}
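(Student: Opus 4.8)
The plan is to reduce the statement, for every $i \geq 1$, to the case $i = 1$ by a dimension shift, and then to combine Lemma~\ref{torsyz} with the finiteness of $\eu$ and the existence of $\hk$ obtained earlier. (For $i = 0$ one has $\Tor_0^R(M, \ps R) = F^n(M)$, whose length is $O(q^d)$ only when $\length(M) < \infty$; in that case $\syz^0 M = M$ and both assertions reduce to Corollary~\ref{main}, so I assume $i \geq 1$ from now on.)

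First I would record two preliminary facts. Since $\IPD(M) \inc \{\m\}$, the module $M$ has finite projective dimension on the punctured spectrum, so localizing the theorem of Peskine-Szpiro (\cite{PS2}) at each $\p \neq \m$ --- exactly as in the proof of Theorem~\ref{extlimit} --- shows that $\Tor_i^R(M, \ps R)$ is supported only at $\m$ for $i > 0$, hence has finite length and equals its own $\lc^0_\m$. Second, fixing a minimal free resolution $\cdots \to F_1 \to F_0 \to M \to 0$ and applying $-\otimes_R \ps R$ to the short exact sequences $\ses{\syz^{j}M}{F_{j-1}}{\syz^{j-1}M}$, the flatness of the $F_j$ (so $\Tor_1^R(F_{j-1}, \ps R) = 0$) yields both the dimension shift $\Tor_i^R(M, \ps R) \isom \Tor_1^R(\syz^{i-1}M, \ps R)$ and a realization of the latter as the submodule $\ker\!\big(F^n(\syz^i M) \to F^n(F_{i-1})\big)$ of $F^n(\syz^i M)$. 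I would also note that each $\syz^j M$ still satisfies $\IPD(\syz^j M) \inc \{\m\}$, since (as recalled in the introduction) the class of modules with $\IPD \inc \{\m\}$ is closed under kernels of epimorphisms and contains all free modules.

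Granting this, the bound $\length(\Tor_i^R(M, \ps R)) = O(q^d)$ is immediate: $\Tor_1^R(\syz^{i-1}M, \ps R)$ has finite length and embeds in $F^n(\syz^i M)$, and $\lc^0_\m$ is left exact, so
\[
\length(\Tor_i^R(M, \ps R)) = \length(\lc^0_\m(\Tor_1^R(\syz^{i-1}M, \ps R))) \leq \length(\lc^0_\m(F^n(\syz^i M))) = \fhk{\syz^i M}(n);
\]
since $R$ is formally equidimensional and $\IPD(\syz^i M) \inc \{\m\}$, the finiteness of $\eu$ established earlier gives $\eu(\syz^i M) < \infty$, which means $\fhk{\syz^i M}(n) = O(q^d)$, hence so is $\length(\Tor_i^R(M, \ps R))$. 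For the Cohen-Macaulay case I would first handle $d = 0$ separately (all modules have finite length, $F^n(N) \isom R^{\mu(N)}$ for $q \gg 0$, and a direct length count with the exact sequence above gives $\length(\Tor_i^R(M, \ps R)) = \length(\lc^0_\m(F^n(\syz^i M)))$ for $q \gg 0$, whence the limit exists and equals $\hk(\syz^i M)$). For $d > 0$ one has $\depth R = d > 0$, so Lemma~\ref{torsyz}, applied with $M$ replaced by $\syz^{i-1}M$, gives $\lc^0_\m(\Tor_1^R(\syz^{i-1}M, \ps R)) \isom \lc^0_\m(F^n(\syz^i M))$; combined with the finite-length fact this promotes the inequality just displayed to the equality $\length(\Tor_i^R(M, \ps R)) = \fhk{\syz^i M}(n)$ for every $n$. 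Dividing by $q^d$ and letting $q \to \infty$ then identifies the limit with $\hk(\syz^i M)$, which exists by Corollary~\ref{main} because $R$ is Cohen-Macaulay and $\IPD(\syz^i M) \inc \{\m\}$.

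I do not anticipate a serious obstacle. The one point needing care is that Lemma~\ref{torsyz} requires positive depth and so is unavailable over an arbitrary formally equidimensional ring; what rescues the general $O(q^d)$ bound is precisely the observation that $\Tor_1^R(\syz^{i-1}M, \ps R)$ is literally a submodule of $F^n(\syz^i M)$, which forces the inequality $\length(\Tor_i^R(M, \ps R)) \leq \fhk{\syz^i M}(n)$ with no hypothesis on $\depth R$ --- only the passage to equality needs $\depth R > 0$.
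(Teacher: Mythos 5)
Your proposal is correct and takes essentially the same route as the paper: dimension-shift to $\Tor_1$, identify $\length(\Tor_i^R(M,\ps R))$ with $\fhk{\syz^i M}(n)$ via Lemma~\ref{torsyz}, and then invoke the finiteness of $\eu$ for the $O(q^d)$ bound and Corollary~\ref{main} for the Cohen--Macaulay limit. Your added observation that $\Tor_1^R(\syz^{i-1}M,\ps R)$ embeds directly into $F^n(\syz^i M)$ is a nice refinement, since the paper's terse reduction to Lemma~\ref{torsyz} silently uses positive depth, which an arbitrary formally equidimensional ring need not have for the general bound.
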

\begin{proof}
This follows from Lemma~\ref{torsyz}, Corollary~\ref{34}, and Corollary~\ref{cor bound}.
Note that $\Tor_i^R(M, \ps R)$ has finite length by Corollary~\ref{PScor}.
\end{proof}

Next we want to discuss applications to projective varieties over a field of positive characteristic. In this situation our results can be used to prove that certain limits recently studied by Brenner in \cite{Bre} exist. 
\begin{cor}
Let $X$ be a polarized projective variety of dimension $d$ over a field $k$ of characteristic $p$, with a fixed very ample invertible sheaf $\mathcal O_X(1)$. Let $\mathcal F$ be a vector bundle on $X$. For any $0<k<\dim X$ 
if $X$ is  $(S_{k+2})$, then
the limit
\[ \lim_{n \to \infty} \frac{\sum_{m\in \mathbb Z} h^k((F^{n\ast}\mathcal F)(m))}{p^{n(d+1)}}\]
exists.
\end{cor} 

\begin{proof}
We embed $X$ into a projective space using $O_X(1)$ and let $R$ be the local ring at the vertex of the coordinate ring of $X$ with respect to the said embedding. One can find, up to shifts, a (non-unique) finitely generated module $R$-module $M$ such that $\widetilde M \cong \mathcal F$. There are well-known isomorphisms 
$\lc_\m^{i+1}(M) \cong \oplus_{i \in \mathbb Z} H^{i}(X, \mathcal F(i))$
which allow us to use Theorem \ref{extlimit}.

\end{proof}

We also want to present an upper bound obtained by a different technique albeit depending on a widely open conjecture. 

\begin{defn}
A module $M$  over a local ring $(R, \m)$ 
satisfies condition (LC) if there exists an integer $l$ such that 
$\m^{lp^n}\lc^0_{\m}(F^n(M))=0$ for all $n$.
\end{defn}

This condition arose from the problem of localization of tight closure (for example, see \cite[Theorem 6]{Katzman}). 
In fact (see the discussion after Corollary~3.2 in \cite{Hu2}), 
if all cyclic modules of $R/I$ satisfy (LC) then weakly $F$-regular implies $F$-regular.

The following proposition was further generalized by Adela Vraciu (\cite{Vra}), who showed that under assumptions of the proposition $\hk (R/I)$ exists for all $I$, and the generalized Hilbert--Kunz function
is a linear combination of the classical Hilbert--Kunz functions of $\m$-primary ideals.
Because our proof is short and applies to modules too, we decided to leave it be.
Another generalization of this result appears in \cite{HeJe}, 
where it was shown that the limit exists by requiring $I$ (only!) satisfy a stronger version 
of (LC). 

\begin{prop}\label{prop:LC}
Let $(R, \m)$ be a local ring that satisfies countable prime avoidance.
If all finitely generated modules over $R$ satisfy (LC) then $\eu(M)$ is finite for any finitely generated module $M$. 
\end{prop}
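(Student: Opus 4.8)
The plan is to reduce to the setting where Seibert-type arguments apply, using the (LC) hypothesis to control the support and annihilator of the modules $\lc^0_\m(F^n(M))$. First I would reduce to $R$ complete with perfect residue field by Remark~\ref{complete}, which preserves both the relevant lengths and the (LC) property (since $\m S$ is the maximal ideal of $S$ and $F^n$ commutes with the base change). Thus we may assume $R$ is $F$-finite. The point of (LC) is that there is a fixed integer $l$ with $\m^{lq}\lc^0_\m(F^n(M))=0$ for all $n$; this says the torsion submodule $\lc^0_\m(F^n(M))$ is ``uniformly nilpotent'' in a strong sense, so one can hope to bound its length by the length of a fixed-degree truncation.

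The key step is to replace $M$ by a module with finite projective dimension on the punctured spectrum. Here is where countable prime avoidance enters: since $\pd_{R_\p} M_\p = \infty$ only for $\p$ in $\IPD(M)$, which may be large, one cannot directly cut down. Instead I would argue as follows. For each $n$, choose a short exact sequence $\ses{K_n}{F_n}{F^n(M)}$ (or rather work with $\lc^0_\m(F^n(M))$ directly). Because $\m^{lq}$ kills $\lc^0_\m(F^n(M))$, this module is a quotient of $F^n(M)$ after dividing by a submodule containing $\m^{lq} F^n(M)$'s image; more precisely $\lc^0_\m(F^n(M))$ is a subquotient of $F^n(M)/\m^{lq}F^n(M) = F^n(M/\m^l M)$ — wait, that is $F^n(M)/\m^{[q]l'}F^n(M)$ for suitable $l'$, so one compares with $F^n(M/\m^{l'}M)$, which is a module of finite length. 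This gives $\length(\lc^0_\m(F^n(M))) \le \length(F^n(M/\m^{l'}M))$, and the right-hand side is a classical Hilbert-Kunz function of the finite-length module $M/\m^{l'}M$, hence $O(q^d)$ by the Kunz-Monsky theory (or by Seibert's Theorem~\ref{sei} applied to $\mathfrak C = $ finite-length modules with $g = \length$). Countable prime avoidance is presumably needed to choose, uniformly in $n$, the element or the truncation realizing this comparison — i.e. to promote the pointwise (LC) bound to a single exponent $l$ working across all $q$, or to handle the passage through $\Ass$ of the countably many modules $F^n(M)$.

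The main obstacle I anticipate is making the comparison $\lc^0_\m(F^n(M)) \hookrightarrow F^n(M/\m^{l'}M)$ (or the relevant subquotient statement) precise and functorial in $n$: one must check that the submodule of $F^n(M)$ cut out by (LC) sits inside $\m^{[q]c}F^n(M)$ for a fixed $c$, using $\m^{lq}\supseteq \m^{[q']}$ for suitable comparisons between ordinary and Frobenius powers, and that the resulting quotient is a Frobenius iterate of a \emph{fixed} finite-length module (independent of $n$). Once that is in place, $\eu(M) = \limsup_q \length(\lc^0_\m(F^n(M)))/q^d \le \ehk(M/\m^{l'}M)/1 < \infty$ follows immediately, and countable prime avoidance is invoked precisely to guarantee the uniform exponent in the (LC) hypothesis can be used across the countable family $\{F^n(M)\}_{n\ge 0}$, closing the argument.
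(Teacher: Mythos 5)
Your one-shot reduction to the fixed finite-length module $M/\m^{l'}M$ does not go through, and you have in fact flagged exactly the step that fails without resolving it. From (LC) you do get that $(\m^{l})^{[q]}$ kills $H_n:=\lc^0_\m(F^n(M))$ (since $(\m^{l})^{[q]}\subseteq \m^{lq}$), but that only says $H_n$ is a module over $R/(\m^{l})^{[q]}$; it does not exhibit $H_n$ as a subquotient of $F^n(M)/(\m^{l})^{[q]}F^n(M)=F^n(M/\m^{l}M)$ in any length-bounding way. The natural map $H_n\to F^n(M)/(\m^{l})^{[q]}F^n(M)$ has kernel $H_n\cap(\m^{l})^{[q]}F^n(M)$, and nothing prevents the $\m$-torsion of $F^n(M)$ from sitting deep inside $(\m^{l})^{[q]}F^n(M)$; there is also no general principle saying that a submodule killed by an ideal $I$ has length at most $\length(N/IN)$. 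So the inequality $\length(H_n)\le\length(F^n(M/\m^{l'}M))$ is unproved, and this is the whole content of the proposition. Your guess about the role of countable prime avoidance is also off target: (LC) already provides a single exponent $l$ for the given $M$, so no uniformity needs to be ``promoted''; prime avoidance is needed for a different purpose, namely to manufacture the injectivity your comparison is missing.

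The paper's argument repairs precisely this point by using a principal element and an induction on $\dim M$, rather than a one-step comparison with a finite-length module. By countable prime avoidance choose one element $x\in\m^{l}$ avoiding every associated prime of every $F^n(M)$ other than $\m$ (a countable set). Then $x$ is a nonzerodivisor on $C_n=F^n(M)/H_n$: indeed $\lc^0_\m(C_n)=0$, and any $\p\in\Ass(C_n)$ with $\p\neq\m$ satisfies $(C_n)_\p\cong F^n(M)_\p$, so $\p\in\Ass(F^n(M))$. Hence $\Tor_1^R(C_n,R/(x^q))=0$, and since $x^q\in\m^{lq}$ kills $H_n$, tensoring $\ses{H_n}{F^n(M)}{C_n}$ with $R/(x^q)$ gives the exact sequence $0\to H_n\to F^n(M)/x^qF^n(M)\to C_n/x^qC_n\to 0$, i.e.\ an embedding $H_n\hookrightarrow \lc^0_\m\bigl(F^n(M)/x^qF^n(M)\bigr)=\lc^0_\m\bigl(F^n(M/xM)\bigr)$, using right-exactness of $F^n$ and the fact that $H_n$ is $\m$-torsion. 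Since $\dim M/xM<\dim M$ and, by hypothesis, every finitely generated module (in particular $M/xM$) again satisfies (LC), induction on $\dim M$ finishes the proof, the base case $\dim M=0$ coming from Theorem~\ref{sei}. If you want to keep the spirit of your outline, you must cut down by such carefully chosen parameters one at a time; the comparison with the single fixed module $M/\m^{l'}M$ cannot be forced.
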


\begin{proof}
We proceed by induction on $\dim M$, the base case $\dim M=0$ follows from Theorem~\ref{sei}. 
Consider the countable set $S = \cup_n \Ass (F^n(M)) \setminus \{\m\}$.
By countable avoidance one can find $x \in \m^l$ such that $x$ is not in any prime of $S$.  
We know that $x^{p^n}\lc^0_{\m}(F^n(M))=0$ for all ${p^n}$.

We claim that $x$ is a nonzerodivisor on $C := F^n(M)/\lc^0_{\m}(F^n(M))$. 
Clearly, $\depth C >0$, so it suffices to show that $\Ass(C)\subseteq S$. 
Let $\p \in \Ass(C)$. Since $\p$ is not $\m$, one can easily see that $F^n(M)_{\p}\cong C_\p$, so $\p \in S$. 

After tensoring the exact sequence $\ses {\lc^0_{\m}(F^n(M))}{F^n(M)}{C}$ 
with $R/(x^{p^n})$, and using
that $\Tor_1^R(C,R/(x^{p^n}))=0$ and $x^{p^n}\lc^0_{\m}(F^n(M))=0$ we obtain
the exact sequence
\[\ses {\lc^0_{\m}(F^n(M))}{F^n(M)/x^{p^n}F^n(M)}{C/x^{p^n}C}.\]
After taking local cohomology we get an inclusion
$\lc^0_{\m}(F^n(M)) \subseteq \lc^0_{\m}\left (F^n(M)/x^{p^n}F^n(M) \right).$
Moreover, applying the right-exact functor $F^n(-)$ to the exact sequence
\[0 \to M \xrightarrow{x} M \to M/xM \to 0,\]
we see that $F^n(M)/x^{p^n}F^n(M) = F^n(M/xM)$.
Thus, $\lc^0_{\m}(F^n(M))$ can be embedded in $\lc^0_{\m}(F^n(M/xM))$, 
and, since $\dim M/xM<\dim M$, the result follows by induction.
\end{proof}

Countable prime avoidance is a mild condition: it is satisfied if $R$ is complete (\cite{Burch}),
or if the residue field is uncountable.

\section{Positivity}

The Hilbert--Kunz multiplicity of a finite length module is positive,
so it is natural to investigate positivity of the generalized version. 
Surprisingly, it is indeed positive over a complete intersection unless the module has non-maximal projective dimension.

First we establish a special case which will be used in the key result, Corollary~\ref{posci}.
A part of the proof easily follows from the general result in the previous section,
but we need it to get the full statement.

\begin{lemma}\label{finpdbig}
Let $(R, \m)$ be a Gorenstein local ring of dimension $d$ and $M$ be a module of finite projective dimension.
If $\pd_R M \leq k$ and $\pd_{R_\p} M_{\p} < k$ on the punctured spectrum, 
then 
\[\lim_{n \to \infty} \frac{\length(\lc^{d-k}_{\m}(F^n(M)))}{p^{nd}} \text{ exists}.\]
Moreover, the limit is positive if and only if $\pd_R M = k$.
\end{lemma}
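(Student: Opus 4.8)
The plan is to run the same spectral sequence machinery from Theorem~\ref{extlimit} but now keep track of the extra term instead of discarding it. Since $R$ is Gorenstein, the dualizing complex is just $R$ placed in degree $d$, so $\CH^0(D_R) = 0$ unless $d = 0$; wait—more precisely, after shifting, $D_R \simeq R[d]$, so $\CH^q(D_S) = \ps R$ concentrated in the single cohomological degree $q = -d$ (with the conventions of the excerpt, $\CH^0(D_R) = R$ when $d=0$, and in general only one cohomology is nonzero). Thus the spectral sequence $E_2^{p,q} = \Ext^p_R(M, \CH^q(D_S))$ collapses to a single row, giving directly
\[
\lc^{d-k}_{\m}(F^n(M)) \cong \Ext^k_R(M, \ps R).
\]
Wait, let me re-index: with $D_R = R[d]$ we get $\CH^{d-k}(F^* \otimes_R D_S) = \CH^{-k}(F^* \otimes_R \ps R) = \Ext^k_R(M, \ps R)$ (using that $F$ is a free resolution of $M$). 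So the first step is simply to identify $\length(\lc^{d-k}_{\m}(F^n(M))) = \length(\Ext^k_R(M, \ps R))$, with no error term at all—the Gorenstein hypothesis makes this exact.

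Next I would apply Seibert's theorem (Theorem~\ref{sei}) to the subadditive, direct-sum-additive function $g(X) = \length(\Ext^k_R(M, X))$. For this to be legitimate I need $\Ext^k_R(M, X)$ to have finite length for all $X \in \mo(R)$; this is exactly the hypothesis $\pd_{R_\p} M_\p < k$ on the punctured spectrum, which forces $\Ext^k_{R_\p}(M_\p, X_\p) = 0$ for $\p \neq \m$. Applying the theorem to the family $\mathfrak C = \mo(R)$ (all modules have dimension $\leq d$) yields $g(\ps R) = c(M) q^d + O(q^{d-1})$, hence the limit $\lim_n \length(\Ext^k_R(M, \ps R))/q^d = c(M)$ exists. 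This settles the existence statement.

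For the positivity statement: I must show $c(M) > 0$ iff $\pd M = k$. One direction is immediate—if $\pd M < k$, then $\pd M \leq k-1$, so $\Ext^k_R(M, -) \equiv 0$ and $c(M) = 0$. For the converse, suppose $\pd M = k$. Then $\Ext^k_R(M, R) \neq 0$, and since $R$ is Gorenstein (so $R$ has finite injective dimension $d$ and $\Ext^i_R(M,R)$ vanishes for $i > d$), I want to show $\length(\Ext^k_R(M, \ps R))$ grows like $q^d$. The cleanest approach: $\Ext^k_R(M, \ps R) = \Ext^k_R(M, R) \otimes_R \ps R = F^n(\Ext^k_R(M,R))$ only if $\Ext^k_R(M,R)$ has finite projective dimension, which it need not; instead I would argue that $\Ext^k_R(M, \ps R) \cong \Tor^R_{?}(\dots)$—better to use that $N := \Ext^k_R(M, R)$ is a nonzero finite-length module (it is supported only at $\m$ by the punctured-spectrum hypothesis, being the top nonvanishing Ext locally) and relate $\length(\Ext^k_R(M,\ps R))$ to $\length(\lc^0_\m(F^n(N)))$ or directly apply the Dutta–Miller type result. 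The key point is that for a nonzero finite-length module $N$ one has $\eu(N) > 0$; combined with a comparison $\Ext^k_R(M, \ps R) \twoheadrightarrow$ or $\hookleftarrow$ something built from $F^n(N)$, positivity follows. I expect \emph{this} comparison to be the main obstacle: one needs that passing $\ps R$ through the last map of the minimal free resolution of $M$ does not kill the top cohomology asymptotically, which should follow because the cokernel of $\Hom(F_{k-1}, \ps R) \to \Hom(F_k, \ps R)$ dominates $\length \geq \length(\ps R / \m^{[q]}\text{-type bound})$; quantitatively, since $F_k \to F_{k-1}$ has image in $\m F_{k-1}$ (minimality) and $M$ has finite projective dimension, one gets $\length(\Ext^k_R(M,\ps R)) \geq \length(\ps R) - \length(\m^{[\cdot]}\cdots) \gtrsim (\text{positive constant}) q^d$ by Hilbert–Kunz positivity for the module $N$. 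I would make this precise using the exact sequence relating $\Ext^k_R(M,\ps R)$ to the finite-length module $\Ext^k_R(M,R)$ via a change-of-rings/base-change spectral sequence and the fact that $\ehk$ of a nonzero finite length module is positive.
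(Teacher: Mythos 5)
Your existence argument is sound and close in spirit to the paper's. After the standard reduction of Remark~\ref{complete}, you specialize the spectral sequence of Theorem~\ref{extlimit} to the Gorenstein case (where $D_R$ has cohomology only in degree $0$, equal to $R$) to get $\length(\lc^{d-k}_{\m}(F^n(M)))=\length(\Ext^k_R(M,\ps R))$, and you then apply Seibert's theorem to $g(X)=\length(\Ext^k_R(M,X))$, which is legitimate since $\pd_{R_\p}M_\p<k$ for $\p\neq\m$ makes these lengths finite and the long exact sequence gives subadditivity. The paper instead applies local duality directly to $F^n(M)$, observes that the Frobenius twist of the finite free resolution of $M$ resolves $F^n(M)$, computes $\Ext^k_R(F^n(M),R)\cong\Ext^k_R(M,R)\otimes_R\ps R$, and applies Seibert's theorem to $g(X)=\length(\Ext^k_R(M,R)\otimes_R X)$; for the existence statement the two routes are essentially equivalent, and your handling of the easy implication ($\pd M<k$ forces the limit to vanish) is also fine.

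The gap is in the positivity direction. You reject the identification $\Ext^k_R(M,\ps R)\cong\Ext^k_R(M,R)\otimes_R\ps R$ on the grounds that it would require $\Ext^k_R(M,R)$ to have finite projective dimension, but no such hypothesis is needed at the top degree $k=\pd M$: dualizing the resolution gives a presentation $\Hom(F_{k-1},R)\to\Hom(F_k,R)\to\Ext^k_R(M,R)\to 0$, and applying the right-exact functor $-\otimes_R\ps R$ immediately yields $\Ext^k_R(M,\ps R)\cong F^n(\Ext^k_R(M,R))$ --- which is precisely the paper's key computation. Having discarded this, your substitute is only a sketch: the inequality $\length(\Ext^k_R(M,\ps R))\geq\length(\ps R)-\cdots$ is not meaningful ($\ps R$ has infinite length when $d>0$), and the promised ``change-of-rings/base-change spectral sequence'' comparison is never carried out, so the implication ``$\pd M=k$ implies the limit is positive'' is not actually proved. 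The correct finish, which your minimality remark is circling around, is short: since $\Ext^k_R(M,R)\neq 0$ it surjects onto $k$, so $F^n(\Ext^k_R(M,R))$ surjects onto $F^n(k)\cong R/\m^{[q]}$ (equivalently, minimality puts the image of the dualized last differential inside $\m^{[q]}\Hom(F_k,\ps R)$), hence $\length(\Ext^k_R(M,\ps R))\geq\length(R/\m^{[q]})$, and positivity of the classical Hilbert--Kunz multiplicity of $R$ gives a positive limit.
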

\begin{proof} 
If $\pd_R M < k$, then by Theorem~\ref{tPeskine-Szpiro} $\pd F^n(M) < k$ for any $n$.  
Hence $\depth (F^n(M)) > d - k$,  so $\lc^{d - k}_{\m} (F^n(M)) = 0$ for all $n$ and there is nothing to prove. 

Now, assume that $\pd_RM=k$. 
Consider a minimal resolution of $M$:
\[0 \to F_k \xrightarrow{\delta_{d}} F_{k-1} \xrightarrow{\delta_{k-1}}  \cdots \xrightarrow{\delta_1} F_0 \to M \to 0.\]
By Theorem~\ref{tPeskine-Szpiro}, a resolution of $F^n(M)$ would look like:
\[0 \to F_k \xrightarrow{\delta_{k}^{[p^n]}} F_{k-1} \xrightarrow{\delta_{k-1}^{[p^n]}}  
\cdots \xrightarrow{\delta_1^{[p^n]}} F_0 \to F^n(M) \to 0.\]
It follows that $\Ext^k_R(F^n(M),R)$ is the cokernel of the map $F_{k-1} \xrightarrow{{\delta_{k}^*}^{[p^n]}} F_{k}$ where $\delta^*$ represents the transposed matrix of $\delta$. 
Thus we obtain that $\Ext^k_R(F^n(M),R) \cong  \Ext^k_R(M,R)\otimes_R \ps R$, 
and its limit exists by Theorem~\ref{sei}
applied to $g(X) = \length(\Ext^k_R(M,R) \otimes_R X)$. 
Note that $\length(\Ext^k_R(M,R)) < \infty$, since $\pd M_{\p} < k$ on the punctured spectrum.

Since $\Ext^k_R(M,R) \neq 0$, $g(X) \geq \length(R/\m \otimes_R X)$ for any $X$.
In particular, $g(\ps R) \geq \length (R/\m^{[p^n]})$.
By Local duality, $\length(\lc^{d - k}_{\m}(F^n(M)) = \length(\Ext^k_R(F^n(M),R)) = g(\ps R)$.
Since $\ehk(R) > 0$, the generalized multiplicity will be positive too.
\end{proof}

\begin{cor}\label {finpd}
Let $R$ be a Gorenstein local ring and $M$ be a module of finite projective dimension.
Then $\hk(M)$ exists and is positive if and only if $\depth M = 0$.
\end{cor}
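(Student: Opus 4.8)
The plan is to obtain this immediately from Lemma~\ref{finpdbig} by specializing to $k = d = \dim R$ and translating the projective-dimension criterion via the Auslander--Buchsbaum formula. I would first reduce to the case $M \neq 0$, the zero module being vacuous (both $\hk(M) = 0$ and $\depth M = \infty$). Since $R$ is Gorenstein it is Cohen--Macaulay, so $\depth R = d$; as $\pd M < \infty$, Auslander--Buchsbaum gives $\pd M = d - \depth M$, and because $0 \leq \depth M \leq \dim M \leq d$ for $M \neq 0$ we get $0 \leq \pd M \leq d$. In particular $\pd M \leq d$, which is the first hypothesis of Lemma~\ref{finpdbig} with $k = d$.

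The only real point to check is the punctured-spectrum hypothesis $\pd_{R_\p} M_\p < d$ for $\p \neq \m$. Here I would use that a Gorenstein local ring is catenary and equidimensional, so $\height \p + \dim R/\p = d$; since $\p \neq \m$ forces $\dim R/\p \geq 1$, we get $\dim R_\p = \height \p \leq d - 1$. As $\pd_{R_\p} M_\p < \infty$ and $R_\p$ is again Cohen--Macaulay, Auslander--Buchsbaum at $\p$ yields $\pd_{R_\p} M_\p = \dim R_\p - \depth M_\p \leq \dim R_\p \leq d - 1 < d$. When $d = 0$ this condition is vacuous, so the argument is uniform. Thus the hypotheses of Lemma~\ref{finpdbig} are met with $k = d$.

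Applying the lemma, $\lim_{n \to \infty} \length(\lc^0_\m(F^n(M)))/q^d$ exists, i.e. $\hk(M)$ exists, and it is positive if and only if $\pd M = d$. A final appeal to Auslander--Buchsbaum rewrites $\pd M = d$ as $\depth M = \depth R - \pd M = 0$, giving the stated equivalence. I do not anticipate any genuine obstacle: all the substance is already in Lemma~\ref{finpdbig}, and the deduction only requires the Auslander--Buchsbaum formula together with the fact that Gorenstein local rings are equidimensional and catenary --- the latter being exactly what guarantees $\dim R_\p \leq d-1$, hence the strict inequality $\pd_{R_\p} M_\p < d$ needed to invoke the lemma, at every non-maximal prime.
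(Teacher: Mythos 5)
Your proposal is correct and is essentially the paper's intended argument: the corollary is stated as the immediate specialization of Lemma~\ref{finpdbig} to $k=d$ (so that $\lc^{d-k}_\m=\lc^0_\m$), with the Auslander--Buchsbaum formula converting $\pd M=d$ into $\depth M=0$. Your verification of the punctured-spectrum hypothesis via $\pd_{R_\p}M_\p\leq\dim R_\p\leq d-1$ is exactly the right (if slightly over-engineered, since $\height\p<d$ for $\p\neq\m$ holds in any local ring) bookkeeping the paper leaves implicit.
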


\begin{prop}\label{edci}
Le $(R, \m)$ be a Gorenstein local ring. The following are equivalent:
\begin{enumerate}
\item  $\hk(M) > 0$ for all $M$ such that $ \IPD(M) = \{\m\}$,
\item  $\hk(M) > 0$ for all $M$ such that $\IPD(M) = \{\m\}$ and $\dim M = \dim R$,
\item  $\hk(M) > 0$ for all maximal Cohen-Macaulay $M$ such that $ \IPD(M) = \{\m\}$.
\end{enumerate}
Moreover, if $R$ is a complete intersection, then the following condition is also equivalent to the first three:
\begin{enumerate} 
\item[(4)]  $\hk(M) > 0$ for all $M$ such that $\IPD(M) = \{\m\}$ and $\depth M = 0$.
\end{enumerate}
\end{prop}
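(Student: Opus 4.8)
The plan is to prove the easy implications $(1)\Rightarrow(2)\Rightarrow(3)$ and $(1)\Rightarrow(4)$, then to get $(3)\Rightarrow(1)$ by a Maximal Cohen--Macaulay (MCM) approximation argument, and finally, over a complete intersection, to close the loop with $(4)\Rightarrow(3)$. The first part should be automatic: the modules appearing in $(2)$ and in $(4)$ form subclasses of the class in $(1)$, so $(1)\Rightarrow(2)$ and $(1)\Rightarrow(4)$; and if $M$ is MCM then $\dim M=\depth M=\dim R$, so $M$ is among the modules of $(2)$, whence $(2)\Rightarrow(3)$. I would also reduce to $d=\dim R>0$ (when $d=0$ every finite module has finite length and $\hk(L)>0$ for $L\neq0$, since $F^n(L)$ surjects onto $R/\m^{[q]}$, so everything is vacuous), and invoke Remark~\ref{complete} to assume $R$ complete and $F$-finite; by Corollary~\ref{main} the multiplicity $\hk(M)$ exists whenever $\IPD(M)\inc\{\m\}$, so the statements make sense.

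For $(3)\Rightarrow(1)$, fix $M$ with $\IPD(M)=\{\m\}$ (so $\pd_R M=\infty$). If $\depth M>0$ I would take the Auslander--Buchweitz approximation $\ses YXM$ with $X$ MCM and $\pd_R Y<\infty$: localizing shows $\pd_{R_\p}X_\p<\infty$ off $\m$, and $\pd_R X<\infty$ would make the MCM module $X$ free over the Gorenstein ring $R$, forcing $\pd_R M<\infty$; hence $\IPD(X)=\{\m\}$ and $(3)$ gives $\hk(X)>0$. Since $Y\inc X$ and $\depth X=d\ge1$, we get $\lc^0_\m(Y)=0$, so $\hk(Y)=0$ by Corollary~\ref{finpd}. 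Applying $F^n$ to the approximation and using the Peskine--Szpiro vanishing $\Tor_i(Y,\ps R)=0$ for $i>0$ produces
\[0\to\Tor_1(X,\ps R)\to\Tor_1(M,\ps R)\to F^n(Y)\to F^n(X)\to F^n(M)\to 0,\]
whose first three terms have finite length (as $M$ is locally of finite projective dimension off $\m$); breaking it into $\ses W{F^n(Y)}Q$ with $\length(W)<\infty$ and $\ses Q{F^n(X)}{F^n(M)}$ and taking $\lc^0_\m$ yields $\fhk M(n)\ge\fhk X(n)-\length\lc^0_\m(Q)\ge\fhk X(n)-\fhk Y(n)$, so $\hk(M)\ge\hk(X)-\hk(Y)=\hk(X)>0$. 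If instead $\depth M=0$, I would use $\ses{\lc^0_\m(M)}M{\bar M}$ with $0\neq\lc^0_\m(M)$ of finite length: if $\bar M=0$ then $M$ has finite length and $\hk(M)>0$; otherwise $\depth\bar M>0$, and applying $F^n$ and $\lc^0_\m$ gives $\ses{L_n}{\lc^0_\m(F^n(M))}{\lc^0_\m(F^n(\bar M))}$ with $\length(L_n)<\infty$, so $\fhk M(n)=\length(L_n)+\fhk{\bar M}(n)$. When $\pd_R\bar M=\infty$ we have $\IPD(\bar M)=\{\m\}$ with $\depth\bar M>0$, so the previous case gives $\hk(M)\ge\hk(\bar M)>0$; when $\pd_R\bar M<\infty$ we have $\Tor_1(\bar M,\ps R)=0$, so $L_n\isom F^n(\lc^0_\m(M))$ and $\hk(M)\ge\hk(\lc^0_\m(M))>0$.

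For the complete intersection case it remains to prove $(4)\Rightarrow(3)$ (then $(4)\Rightarrow(1)$ follows from $(3)\Rightarrow(1)$). The strategy I would try is: given an MCM module $X$ with $\IPD(X)=\{\m\}$, attach to it a short exact sequence $\ses XBC$ with $\depth B=0$ and $\IPD(B)=\{\m\}$, while $\hk(C)=0$ (e.g.\ with $\dim C<d$, or with $\pd_R C<\infty$ and $\depth C>0$); then the subadditivity of $\hk$ — obtained, exactly as above, by applying $F^n$ and $\lc^0_\m$ to $\ses XBC$ — gives $\hk(X)\ge\hk(B)-\hk(C)=\hk(B)>0$ by $(4)$. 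Dually one can also try to bound $\hk(X)$ from below by $\hk$ of a depth-zero module via a sequence $\ses ABX$ together with the estimate $\fhk A(n)\le\length\Tor_1(X,\ps R)+\fhk B(n)$ and $\hk(\syz X)$-type corrections coming from Corollary~\ref{torcm}.

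The main obstacle is precisely this last step. The constraint $\depth B=0$ forces $\lc^0_\m(C)\neq0$, hence $\depth C=0$, so the "negligible'' term $C$ cannot have both finite projective dimension and positive depth and must instead be forced down in dimension (or be arranged to have $\IPD(C)\inc\{\m\}$ despite low dimension) — and building such $B$, $C$ out of an arbitrary MCM module is where the special structure of infinite free resolutions over a complete intersection has to be used: the Eisenbud cohomology operators, the eventual $2$-periodicity in codimension one (after a generic change of parameters $X$ becomes a high syzygy of a suitable module, and over the regular overring $S$ one has $\pd_S X=1$), and the polynomial growth of Betti numbers in general. This is where the deferred computations \ref{Torci}, \ref{Extci}, \ref{locci} enter; making them produce the required depth-zero companion module $B$, and verifying that only the intended constituents contribute to the generalized Hilbert--Kunz multiplicity, is the hard part of the argument.
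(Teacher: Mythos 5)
Your reduction of $(1)\Leftrightarrow(2)\Leftrightarrow(3)$ is correct and is essentially the paper's argument: both proofs run through the Auslander--Buchweitz machinery, the only difference being that the paper uses the exact sequence $\ses{N}{M\oplus F}{H}$ with $N$ maximal Cohen--Macaulay and $\pd H<\infty$, which (after noting $\lc^0_\m(F^n(M\oplus F))\cong\lc^0_\m(F^n(M))$ and $\Tor_1(H,\ps R)=0$) gives $\fhk M(n)\geq \fhk N(n)$ at once, whereas you use $\ses{Y}{X}{M}$ and absorb the finite-length error terms; your estimate $\fhk M(n)\geq \fhk X(n)-\fhk Y(n)$ with $\hk(Y)=0$ is fine (and in fact your case split on $\depth M$ is unnecessary, since $Y\inc X$ already forces $\depth Y>0$).

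The genuine gap is the complete intersection part, i.e.\ $(4)\Rightarrow(1)$, which you yourself flag as unresolved. Your plan --- to attach to an MCM module $X$ a sequence $\ses{X}{B}{C}$ with $\depth B=0$, $\IPD(B)=\{\m\}$ and $\hk(C)=0$ --- runs into exactly the obstruction you name: $\lc^0_\m(X)=0$ forces $\lc^0_\m(B)\hookrightarrow\lc^0_\m(C)$, so $\depth C=0$, and showing $\hk(C)=0$ for such a $C$ (with $\IPD(C)\inc\{\m\}$) is essentially the positivity statement being proved, so the Eisenbud-operator/periodicity route never closes the circle, and no construction of $B,C$ is actually produced. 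The missing idea, and the paper's route, is to apply Frobenius once instead of building a new module: by Theorem~2.5 of \cite{DLM}, which applies because $\ps R$ is strongly rigid over a complete intersection (Avramov--Miller, Dutta), $\IPD(M)=\{\m\}$ forces $\m\in\Ass F^n(M)$ for all $n>0$, so in particular $\depth F(M)=0$ while $\IPD(F(M))\inc\{\m\}$. Since $F^n(F(M))=F^{n+1}(M)$, one has $\fhk{F(M)}(n)=\fhk{M}(n+1)$ and hence $\hk(M)=p^{-d}\,\hk(F(M))$; the latter is positive by hypothesis $(4)$ when $\pd F(M)=\infty$, and by Corollary~\ref{finpd} when $\pd F(M)<\infty$ (as $\depth F(M)=0$). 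This one-step Frobenius shift, combined with the strong rigidity input, is precisely what your proposal lacks.
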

\begin{proof}
Clearly $(1)$ is the strongest condition, so we need to prove the other implications. We assume that $\dim R>0$, 
otherwise the conditions are trivially equivalent. 

First, if $F$ is free, we claim that $\hk (M \oplus F) = \hk(M)$.
Since $F$ is free, the sequence
\[\ses {F^n(M)}{F^n(M \oplus F)}{F^n(F)}\]
is exact and, thus, 
$0 \to \lc^0_{\m}(F^n(M)) \to \lc^0_{\m}(F^n(M \oplus F)) \to \lc^0_{\m}(F^n(F))$
is also exact.
But $\lc^0_{\m}(F^n(F)) = 0$ since $F^n(F) = F$, so we get that $\lc^0_{\m}(F^n(M)) \cong  \lc^0_{\m}(F^n(M \oplus F))$.
This establishes $(2) \Rightarrow (1)$.

Let us  prove that $(3) \Rightarrow (2)$.
Since $R$ is Gorenstein and $\dim M = \dim R$, we can use the Auslander--Bridger approximation (\cite{ABR}) to get
an exact sequence
\[\ses N{M\oplus F}H,\]
where $N$ is maximal Cohen-Macaulay, $F$ is free, 
and $H$ is a module of finite projective dimension such that $\depth H = \depth M$.
From this exact sequence it is easy to see that
\[\length \left (\lc^0_{\m} (F^n(M)) \right) = \length \left (\lc^0_{\m} (F^n(M\oplus F)) \right) 
\geq \length \left(\lc^0_{\m} (F^n(N)) \right).\] 

Now, we prove $(4) \Rightarrow (1)$.
Let $M$ be any module such that $\IPD(M) = \{\m\}$.
By \cite[Theorem~2.5]{DLM}, $\m \in \Ass F^n(M)$ for all $n > 0$, so $\depth F(M) = 0$.
Therefore 
\[\hk(M) 
= \frac 1{p^{\dim R}}\hk (F(M)) > 0.
\]
Note that $F(M)$ can have finite projective dimension, but the limit is still positive by 
the previous corollary.
\end{proof}

\begin{thm}\label{modreg}
Let $(R, \m)$ be a Cohen-Macaulay local ring and $x$ be a regular element.
Suppose $M$ is a finitely generated $R/(x)$-module such that $ \IPD(M) \inc \{\m\}$. 
Then $\hk_R(M) \leq \hk_{R/(x)}(M)$.
\end{thm}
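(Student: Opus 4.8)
The plan is to compare the generalized Hilbert-Kunz functions over $R$ and $R/x$ by relating the Frobenius iterations. Write $S = R/x$ and let $q = p^n$. The key observation is that since $M$ is an $S$-module, $F^n_S(M) = M/x^{[q]}M = M/x^qM$ because $(x)^{[q]} = (x^q)$; this is the same computation used already in Lemma~\ref{torsyz} and Lemma~\ref{finpdbig}. On the other hand, $F^n_R(M)$ need not be an $S$-module, but it is annihilated by $x^q$: indeed $M$ is killed by $x$, so $F^n_R(M) = M \otimes_R {}^{f^n}\!R$ is killed by $x^q$. Thus both $F^n_R(M)$ and $F^n_S(M)$ live over $R/x^q$, and I want a natural surjection $F^n_R(M) \twoheadrightarrow F^n_S(M)$ whose kernel I can control.

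First I would produce the comparison map. Applying the right-exact functor $F^n_R(-)$ to $0 \to M \xrightarrow{x} M \to M/xM \to 0$ (note $M/xM = M$ here, but the point is the structure) — more to the point, use base change of functors: for the surjection of rings $R \to R/x^q$ we have $F^n_R(M) \otimes_R R/x^q = F^n_R(M)$ (as it is already an $R/x^q$-module), and there is a natural map $F^n_R(M) = M \otimes_R {}^{f^n}\!R \to M \otimes_R {}^{f^n}\!(R/x) = F^n_S(M)$ induced by $R \to R/x$ on the right factor. This map is surjective since $M \otimes_R -$ is right exact and ${}^{f^n}\!R \to {}^{f^n}\!(R/x)$ is onto. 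Its kernel is a quotient of $M \otimes_R {}^{f^n}\!(x) \cong M \otimes_R {}^{f^n}\!R$ (as $x$ is a nonzerodivisor, $(x) \cong R$), i.e. the kernel is a homomorphic image of $F^n_R(M)$, itself annihilated by $x^q$. In fact, chasing the sequence $0 \to {}^{f^n}\!(x) \to {}^{f^n}\!R \to {}^{f^n}\!(R/x) \to 0$ through $M \otimes_R -$ gives the four-term exact sequence
\[
\operatorname{Tor}_1^R(M, {}^{f^n}\!(R/x)) \to M \otimes_R {}^{f^n}\!(x) \to F^n_R(M) \to F^n_S(M) \to 0,
\]
and since ${}^{f^n}\!(x) \cong {}^{f^n}\!R$ the middle-left term is $F^n_R(M)$ again, with the map to $F^n_R(M)$ being multiplication by $x^q$ (the image of $x$ under $f^n$). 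Hence the kernel $K_n$ of $F^n_R(M) \to F^n_S(M)$ is exactly $x^q F^n_R(M)$, and we get the short exact sequence $0 \to x^qF^n_R(M) \to F^n_R(M) \to F^n_S(M) \to 0$ — which is nothing but $0 \to x^qF^n_R(M) \to F^n_R(M) \to F^n_R(M)/x^qF^n_R(M) \to 0$, consistent with $F^n_S(M) = F^n_R(M)/x^qF^n_R(M)$.

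Next I would pass to $\lc^0_\m$. Taking the long exact sequence of local cohomology of $0 \to x^qF^n_R(M) \to F^n_R(M) \to F^n_S(M) \to 0$ gives a surjection-with-controlled-cokernel comparing $\lc^0_\m(F^n_R(M))$ with $\lc^0_\m(F^n_S(M))$; more usefully, since $\lc^0_\m$ is left exact and $x^q$ kills $\lc^0_\m(F^n_R(M))$, multiplication by $x^q$ on $F^n_R(M)$ restricts to $0$ on $\lc^0_\m(F^n_R(M))$, so $\lc^0_\m(F^n_R(M)) \subseteq \lc^0_\m(x^qF^n_R(M))$ is forced to be... here I should instead argue: the inclusion $\lc^0_\m(F^n_R(M)) \hookrightarrow F^n_R(M)$ composed with $F^n_R(M) \twoheadrightarrow F^n_S(M)$ has image inside $\lc^0_\m(F^n_S(M))$, with kernel $\lc^0_\m(F^n_R(M)) \cap x^qF^n_R(M)$. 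Since $x^q\lc^0_\m(F^n_R(M)) = 0$ but elements of $\lc^0_\m(F^n_R(M))$ may still lie in $x^qF^n_R(M)$, I bound that kernel by $\lc^0_\m(x^qF^n_R(M))$; and $x^q F^n_R(M)$ is a quotient of $F^n_R(M)$ killed by... no — better: use that $x^q F^n_R(M) \cong F^n_R(M)/(0:_{F^n_R(M)} x^q)$, and the annihilator contains $\lc^0_\m(F^n_R(M))$, so $x^q F^n_R(M)$ is a quotient of $F^n_R(M)/\lc^0_\m(F^n_R(M))$, which has positive depth; hence $\lc^0_\m(x^qF^n_R(M)) = \lc^0_\m(F^n_R(M)/\lc^0_\m(F^n_R(M)))$ composed appropriately — this is $0$. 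Therefore $\lc^0_\m(F^n_R(M)) \hookrightarrow \lc^0_\m(F^n_S(M))$, giving $\fhk{R}{M}(n) \le \fhk{S}{M}(n)$ for all $n$, and dividing by $p^{nd}$ and taking limits (which exist on both sides: over $S$ by Corollary~\ref{main} since $S$ is Cohen-Macaulay, over $R$ also by Corollary~\ref{main}, noting $\IPD_R(M) \subseteq \{\m\}$ because $\pd_{R_\p}M_\p \le \pd_{S_\p}M_\p + 1 < \infty$ off $\m$) yields $\hk_R(M) \le \hk_{R/x}(M)$.

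The main obstacle is the bookkeeping in the last paragraph: verifying cleanly that the kernel of $\lc^0_\m(F^n_R(M)) \to \lc^0_\m(F^n_S(M))$ vanishes, i.e. that no nonzero $\m$-power-torsion element of $F^n_R(M)$ lies in $x^qF^n_R(M)$. The cleanest route is the one sketched: $x^qF^n_R(M) \cong F^n_R(M)/(0:x^q)$ and $(0:_{F^n_R(M)}x^q) \supseteq \lc^0_\m(F^n_R(M))$ since $x^q$ annihilates the latter, so $x^qF^n_R(M)$ is a quotient of the positive-depth module $F^n_R(M)/\lc^0_\m(F^n_R(M))$ — wait, one must check $x^q$ acts injectively on that quotient, which holds because $\lc^0_\m(F^n_R(M))$ is precisely the set of all $\m$-torsion and $x \in \m$, so $(0:x^q)$ equals $\lc^0_\m(F^n_R(M))$ exactly (every element killed by a power of $x$ is killed by a power of $\m$... no, that's false in general). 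The honest fix: $(0 :_{F^n_R(M)} x^q)$ is an $\m$-primary-supported-off... — instead just note $(0:x^q) \subseteq \lc^0_\m(F^n_R(M))$ fails, so one argues directly with the local cohomology long exact sequence of $0 \to x^qF^n_R(M) \to F^n_R(M) \to F^n_S(M) \to 0$: it gives $\lc^0_\m(x^qF^n_R(M)) \to \lc^0_\m(F^n_R(M)) \to \lc^0_\m(F^n_S(M))$, and we need the first map to be zero, equivalently the natural map $\lc^0_\m(F^n_R(M)) \to \lc^0_\m(F^n_S(M))$ to be injective. Since $\lc^0_\m$ of the sub-object $x^qF^n_R(M) \hookrightarrow F^n_R(M)$ maps into $\lc^0_\m(F^n_R(M))$, and $x^q$ is zero on all of $\lc^0_\m(F^n_R(M))$, any element of $\lc^0_\m(x^qF^n_R(M))$ is of the form $x^q\xi$ with $\xi \in \lc^0_\m(F^n_R(M))$ (as it is $\m$-torsion, it lies in $\lc^0_\m(F^n_R(M))$ after restriction, and being in $x^qF^n_R(M)$ it is $x^q$ times an $\m$-torsion element), hence is $0$. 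This completes the argument and is the delicate point to write carefully.
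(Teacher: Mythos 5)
Your construction of the comparison map is fine, but the identification of its kernel is wrong, and this error is fatal to the whole strategy. Under the isomorphism ${}^{f^n}\!(x)\cong{}^{f^n}\!R$, the composite ${}^{f^n}\!R\cong{}^{f^n}\!(x)\hookrightarrow{}^{f^n}\!R$ is multiplication by $x$ in the \emph{target} (natural) $R$-structure --- the structure in which $\lc^0_\m$ and lengths are computed --- not multiplication by $x^q$. Hence the kernel of $F^n_R(M)\twoheadrightarrow F^n_{R/x}(M)$ is $xF^n_R(M)$, i.e. $F^n_{R/x}(M)\cong F^n_R(M)/xF^n_R(M)$, not $F^n_R(M)/x^qF^n_R(M)$. (Check on a cyclic module: for $M=R/(x,I)$ one has $F^n_R(M)=R/(x^q,I^{[q]})$ while $F^n_{R/x}(M)=R/(x,I^{[q]})$.) Consequently the termwise inequality you aim for, $\length(\lc^0_\m(F^n_R(M)))\le\length(\lc^0_\m(F^n_{R/x}(M)))$, is simply false: take $R$ a DVR, $x$ a uniformizer, $M=k$; then the left side is $q$ and the right side is $1$. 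Your opening claim ``$F^n_S(M)=M/x^qM$'' is also vacuous since $xM=0$. Moreover, even if your termwise inequality were true, your last step divides \emph{both} functions by $p^{nd}$, whereas $\hk_{R/x}(M)$ is normalized by $p^{n(d-1)}$; your argument would then prove $\hk_R(M)=0$ whenever $\hk_{R/x}(M)<\infty$, which contradicts, e.g., Corollary~\ref{posci}. What is actually needed is an estimate of the shape $\length(\lc^0_\m(F^n_R(M)))\le q\cdot\length(\lc^0_\m(F^n_{R/x}(M)))+o(q^d)$, with the factor $q$ compensating the discrepancy in normalizations. (Your final ``delicate point'' about injectivity on $\lc^0_\m$ is also not salvaged as written --- an $\m$-torsion element of $x^qF^n_R(M)$ need not be $x^q$ times an $\m$-torsion element --- but this is secondary.)

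Note why the correct statement resists a direct attack on $\lc^0_\m$: filtering $F^n_R(M)$ by $xF^n_R(M)\supseteq x^2F^n_R(M)\supseteq\cdots\supseteq x^qF^n_R(M)=0$ does express $F^n_R(M)$ as an extension of $q$ subquotients, each a quotient of $F^n_{R/x}(M)$, and $\length\lc^0_\m(-)$ is subadditive on short exact sequences; but $\lc^0_\m$ of a \emph{quotient} of $F^n_{R/x}(M)$ is not bounded by $\lc^0_\m(F^n_{R/x}(M))$, so the pieces cannot be controlled. This is exactly why the paper dualizes: by Theorem~\ref{extlimit}, $\hk_R(M)$ and $\hk_{R/x}(M)$ are computed from $\length(\Ext^d_R(M,\ps\omega_R))/q^d$ and $\length(\Ext^{d-1}(M,\ps(\omega_R/x\omega_R)))/q^{d-1}$; the change-of-rings isomorphism gives $\Ext^d_R(M,\ps\omega_R)\cong\Ext^{d-1}(M,\ps\omega_R/x\ps\omega_R)$ with $\ps\omega_R/x\ps\omega_R\cong\ps(\omega_R/x^q\omega_R)$, and then the filtration of $\omega_R/x^q\omega_R$ by $q$ copies of $\omega_R/x\omega_R$, together with subadditivity of $\length(\Ext^{d-1}(M,-))$, yields precisely the factor-of-$q$ bound that matches the two normalizations. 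So your idea of comparing the two Frobenius functors directly does not go through as stated; either repair it to a factor-$q$ estimate on the dual (Ext) side as the paper does, or find a genuinely different way to control $\lc^0_\m$ of the graded pieces.
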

\begin{proof}
Let $d$ denote the dimension of $R$. 
By Proposition~\ref{complete prop}, we may assume that $R$ is complete,  so it has a canonical module $\omega_R$. 

Because $\omega_{R/xR} \cong \omega_R/x\omega_R$, by Theorem~\ref{extlimit} 
we need to compare limits
\[
\hk_{R/x}(M) = \lim_{n\to \infty} \frac{\length \left (\Ext^{d-1}_R(M, \ps (\omega_R/x\omega_R)) \right)}{p^{n(\dim R-1)}}
\]
and
\[
\hk_R(M) = \lim_{n\to \infty} \frac{\length \left(\Ext^{d}_{R}(M, \ps \omega_R) \right)}{p^{n\dim R}}.
\]
Since $M$ is an $R/(x)$-module and $x$ is regular, 
$\Ext^{d}_R(M, \ps \omega_R) \cong \Ext^{d-1}_{R/(x)}(M, \ps \omega_R \otimes_R R/(x))$.
Moreover, applying $- \otimes_R \ps \omega_R$ to the exact sequence
$0 \to (x) \to R \to R/(x)\to 0,$
one can see that $\ps \omega_R \otimes_R R/(x) \cong \ps (\omega_R/x^{p^n}\omega_R)$.
Therefore, 
\[
\hk_R(M) = \lim_{n\to \infty} \frac{\length\left(\Ext^{d-1}_R(M, \ps (\omega_R/x^{p^n}\omega_R))\right)}{p^{n\dim R}}.
\]

Since $x$ is regular on $\omega_R$, for any $m$ the sequence
\[\ses {\omega_R/x^{m-1}\omega_R}{\omega_R/x^m\omega_R}{\omega_R/x\omega_R}\]
is exact. 
Since $\ps (-)$ is an exact functor, this gives a filtration of $\ps (\omega_R/x^{p^n}\omega_R)$ by copies 
of $\ps (\omega_R/x\omega_R)$. 
Applying $\Hom_R (M, -)$ to the filtration, 
we get the exact sequences 
\[\Ext^{d-1}_{R} \left(M, \ps (\omega_R/x^{(m-1)}\omega_R) \right) \to 
\Ext^{d-1}_R \left(M, \ps (\omega_R/x^m\omega_R) \right)  
\to \Ext^{d-1}_R \left(M, \ps (\omega_R/x\omega_R) \right) \to 0.\]
Therefore there is a sequence of inequalities
\[\length \left( \Ext^{d-1}_R(M, \ps (\omega_R/x^{m}\omega_R))\right) 
\leq \length \left(\Ext^{d-1}_R(M, \ps (\omega_R/x\omega_R))\right) + 
\length \left(\Ext^{d-1}_R(M, \ps (\omega_R/x^{m-1}\omega_R)) \right).\]
Thus $\length \left(\Ext^{d-1}_R(M, \ps (\omega_R/x^{p^n}\omega_R)) \right) 
\leq p^n\length \left(\Ext^{d-1}_R(M, \ps (\omega_R/x\omega_R))\right)$ 
and the assertion follows.
\end{proof}

\begin{cor}\label{posci}
Suppose $(R, \m)$ is a local complete intersection and $M$ such that $ \IPD(M) \inc \{\m\}$. 
Then $\hk(M)>0$ unless $\depth M>0$ and $\pd_R M<\infty$ (i.e., $\pd_R M<\dim R$).
\end{cor}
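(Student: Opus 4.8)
The plan is to settle the finite projective dimension case directly, reduce the infinite projective dimension case to a statement about modules of depth zero, and then prove that statement by induction on the number of defining equations, using Theorem~\ref{modreg} as the inductive engine and the regular case (Corollary~\ref{finpd}) as the base.

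\textbf{Reductions.} By Remark~\ref{complete} (whose construction also preserves the complete intersection property, since $\hat R\to S$ is flat local with closed fibre the field $k^{\infty}$) we may assume $R$ is complete, hence $R=Q/(f_1,\ldots,f_c)$ with $(Q,\m_Q)$ complete regular local and $f_1,\ldots,f_c$ a $Q$-regular sequence; in particular $R$ is Gorenstein. If $\pd_R M<\infty$, Corollary~\ref{finpd} gives that $\hk(M)$ exists and is positive precisely when $\depth M=0$; that is, $\hk(M)=0$ exactly in the excluded situation $\depth M>0$ and $\pd_R M<\infty$. So assume from now on $\pd_R M=\infty$, i.e. $\IPD(M)=\{\m\}$, and we must prove $\hk(M)>0$. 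By Proposition~\ref{edci}, over a complete intersection its condition $(4)$ is equivalent to condition $(1)$; and condition $(1)$, applied to $M$, is exactly the desired statement. (The content of that equivalence, via~\cite{DLM}, is that $\IPD(M)=\{\m\}$ forces $\m\in\Ass F^n(M)$, hence $\depth F(M)=0$ and $\hk(M)=\frac{1}{p^{d}}\hk(F(M))$, reducing positivity of $\hk(M)$ to the depth-zero case.) So it suffices to prove condition $(4)$: $\hk_R(N)>0$ whenever $\IPD_R(N)=\{\m\}$ and $\depth N=0$.

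\textbf{The inductive claim.} I will prove the slightly stronger statement $(\star_c)$: for every complete intersection $R=Q/(f_1,\ldots,f_c)$ with $Q$ complete regular local and $f_1,\ldots,f_c$ a $Q$-regular sequence, and every finite $R$-module $N$ with $\IPD_R(N)\inc\{\m\}$ and $\depth N=0$, one has $\hk_R(N)>0$ (the limit exists by Corollary~\ref{main}, as $R$ is Cohen-Macaulay). For $c=0$ the ring $R=Q$ is regular, so $\pd_R N<\infty$ and $(\star_0)$ is Corollary~\ref{finpd}. For $c\geq 1$, set $R'=Q/(f_1,\ldots,f_{c-1})$ --- again a complete intersection of the required form --- and let $x\in R'$ be the image of $f_c$, a nonzerodivisor with $R'/x=R$. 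Now $N$ is an $R'/x$-module with $\IPD_{R'/x}(N)=\IPD_R(N)\inc\{\m\}$, so Theorem~\ref{modreg} applies and yields $\hk_{R'}(N)\leq\hk_{R'/x}(N)=\hk_R(N)$; it therefore suffices to check that $N$, regarded over $R'$, still satisfies the hypotheses of $(\star_{c-1})$. Its depth over $R'$ is $\depth(\m',N)=\depth(\m,N)=0$, because the $R'$-action on $N$ factors through $R$. And $\IPD_{R'}(N)\inc\{\m'\}$: for a prime $\p'\neq\m'$ of $R'$, either $x\notin\p'$, in which case multiplication by $x$ on $N_{\p'}$ is simultaneously zero and invertible so $N_{\p'}=0$; or $x\in\p'$, in which case $\p'$ corresponds to a prime $\p\neq\m$ of $R$ with $N_{\p'}\cong N_{\p}$ of finite projective dimension over $R_{\p}=R'_{\p'}/x$, whence $\pd_{R'_{\p'}}N_{\p'}=1+\pd_{R_{\p}}N_{\p}<\infty$. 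Thus $(\star_{c-1})$ gives $\hk_{R'}(N)>0$, hence $\hk_R(N)>0$, and $(\star_c)$ follows.

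\textbf{Conclusion and the obstacle.} Applying $(\star_c)$ to our ring $R$ establishes condition $(4)$ of Proposition~\ref{edci}, hence condition $(1)$, i.e. $\hk_R(M)>0$ in the remaining case $\IPD(M)=\{\m\}$. The main obstacle is arranging the argument so that Theorem~\ref{modreg} --- which only bounds $\hk_R$ from below by $\hk_{R'}$ for the \emph{larger} ring $R'$ --- can be iterated all the way down to the regular base case; this forces a descending induction and requires checking that the two hypotheses $\IPD\inc\{\m\}$ and $\depth=0$ propagate from $R$ back to $R'$. The other delicate input, that infinite projective dimension forces $\depth F(M)=0$ so that one may replace $M$ by its Frobenius pushforward, is already packaged in Proposition~\ref{edci} (resting on~\cite{DLM}).
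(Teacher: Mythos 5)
Your proposal is correct and takes essentially the same route as the paper: reduce to the complete case, dispose of finite projective dimension via Corollary~\ref{finpd}, invoke Proposition~\ref{edci} to reduce to $\depth M=0$, and then iterate Theorem~\ref{modreg} up from the regular presentation --- your induction on $c$ is exactly the paper's chain $0<\hk_S(M)\leq \hk_{S/x_1}(M)\leq\cdots\leq \hk_R(M)$. The only difference is that you explicitly verify that the $\IPD\inc\{\m\}$ and depth-zero hypotheses persist over the intermediate quotients, a point the paper leaves implicit.
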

\begin{proof}
If $\pd_R M < \infty$, the statement follows from Corollary~\ref{finpd},
so we are left to prove that $\hk(M) > 0$ for all modules $M$ such that $\IPD(M) = \{\m\}$.
Thus, by Proposition~\ref{edci}, we can assume that $\depth M = 0$.

Moreover, as explained in Proposition~\ref{complete prop}, we may assume that $R$ is complete. 
Hence $R = S/(\ul x)$ for a regular local ring $R$ and a regular sequence $\ul x = x_1, \ldots, x_t$.

By Corollary~\ref{finpd}, we know that $\hk_S (M) > 0$.
Now, we can apply Theorem~\ref{modreg} to get
\[ 0 < \hk_S (M) \leq \hk_{S/x_1S}(M) \leq \cdots \leq \hk_R(M). \]
\end{proof}

As the following example shows, 
the statement is not true without the condition on $\IPD(M)$. 
Later, in Example~\ref{need gor}, we will also observe that 
it is also not enough to assume that $R$ is Gorenstein.

\begin{eg}\label{proj dim ex}
Let $R=k[[x,y,z]]/(x^2y-z^2)$, with $\charac k=2$. Then $(x,z)^{[p^n]}=(x^{p^n})$, so $\hk(R/(x,z))=0$, but $\pd R/(x,z) =\infty$. 
\end{eg}

\begin{cor}\label{Torci}
Let  $(R, \m)$ be a local complete intersection with isolated singularity. 
For a finitely generated module $M$, the following are equivalent:
\begin{enumerate}
\item $\fhk M(n) =0$ for all $n$ (i.e., $\depth F^n (M) > 0$ for all $n$),
\item $\hk(M) =0$,
\item $\pd_R M <\dim R$.
\end{enumerate}
\end{cor}
\begin{proof}
(1) $\Rightarrow$ (2) is trivial, (2) $\Leftrightarrow$ (3) is Corollary~\ref{posci}.
If $\pd_R M < \infty$, $\pd_R F(M) = \pd_R M$
by Theorem~\ref{tPeskine-Szpiro}, and (3) $\Rightarrow$ (1) follows.
\end{proof}

The next corollary gives an asymptotic version of rigidity of $\ps R$ over complete intersection (\cite{AM}) 
for a particular class of modules.

\begin{cor}\label{hyprigid}
Suppose $(R, \m)$ is a local complete intersection and  $M$ is such that $ \IPD(M) \inc \{\m\}$. Then for any $i > 0$
\[\lim_{n\to \infty} \frac{\length \left (\Tor_i^R(M,\ps R) \right)}{p^{n\dim R}}\]
exists and is $0$ if and only if  $\pd_RM<\infty$.
\end{cor}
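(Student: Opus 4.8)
The plan is to reduce the statement about $\Tor_i^R(M, \ps R)$ to the positivity result of Corollary~\ref{posci} via Corollary~\ref{torcm}. First I would invoke Remark~\ref{complete} to assume $R$ is complete (and $F$-finite with perfect residue field), which does not affect any of the relevant lengths, the hypothesis $\IPD(M) \inc \{\m\}$, or the complete intersection property. Since a complete intersection is Cohen-Macaulay and $\IPD(M) \inc \{\m\}$, Corollary~\ref{torcm} applies and gives that
\[
\lim_{n\to \infty} \frac{\length(\Tor_i^R(M,\ps R))}{q^d}
\]
exists for every $i$ and equals $\hk(\syz^i M)$, where $\syz^i M$ denotes the $i$th syzygy of $M$ in a (say, minimal) free resolution. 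So the existence half of the corollary is immediate, and it remains to characterize when $\hk(\syz^i M) = 0$.

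Next I would apply Corollary~\ref{posci} to the module $N := \syz^i M$ for $i > 0$. I must first check $\IPD(N) \inc \{\m\}$: this holds because for a prime $\p \neq \m$ we have $\pd_{R_\p} M_\p < \infty$ by hypothesis, hence $\pd_{R_\p} N_\p < \infty$ as well (a syzygy of a module of finite projective dimension again has finite projective dimension). Thus Corollary~\ref{posci} gives that $\hk(N) = 0$ if and only if $\depth N > 0$ and $\pd_R N < \infty$. For $i > 0$, an $i$th syzygy over a Cohen-Macaulay (indeed any depth-positive) ring automatically has positive depth—more precisely, $\depth \syz^i M \geq \min(i, \depth R) \geq 1$ since $\dim R = d \geq 1$ may be assumed (the case $d = 0$ forces $M$ of finite length, where $\Tor_i^R(M, \ps R) = 0$ for $i>0$ exactly when $\pd_R M < \infty$ by Peskine--Szpiro, so the statement is trivial there). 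Hence the depth condition is automatic, and $\hk(\syz^i M) = 0$ if and only if $\pd_R \syz^i M < \infty$, which is equivalent to $\pd_R M < \infty$.

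The only point requiring a little care is the equivalence $\pd_R \syz^i M < \infty \iff \pd_R M < \infty$: the forward direction is the standard fact that truncating a finite free resolution of $M$ at stage $i$ yields a finite free resolution of $\syz^i M$, and conversely if $\syz^i M$ has finite projective dimension then splicing its finite resolution onto the first $i$ steps of the resolution of $M$ gives a finite free resolution of $M$. I expect this to be entirely routine; the main conceptual obstacle—showing that $\hk(\syz^i M)$ vanishes only in the tame case—has already been absorbed into Corollary~\ref{posci}, so this corollary is essentially a formal consequence of the two cited results together with elementary homological bookkeeping about syzygies and depth.
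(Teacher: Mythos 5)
Your argument is correct and is essentially the paper's proof: reduce to $\hk(\syz^i M)$ via Lemma~\ref{torsyz}/Corollary~\ref{torcm}, then apply the positivity result Corollary~\ref{posci}, using that $\depth \syz^i M>0$ and that $\IPD(\syz^i M)\inc\{\m\}$. The only quibble is the $\dim R=0$ case: concluding $\pd_R M<\infty$ from the vanishing of a single $\Tor_i^R(M,\ps R)$ is the rigidity of $\ps R$ over complete intersections (Avramov--Miller/Dutta), not Peskine--Szpiro, which is exactly what the paper invokes there.
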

\begin{proof}
If $\dim R = 0$, then $M$ has finite length and we are done by the rigidity of $\ps R$.
Hence we may assume that $\dim R > 0$.

It follows from Lemma~\ref{torsyz} and Corollary~\ref{torcm} that
$\lim\limits_{n\to \infty} \frac{\length \left (\Tor_i^R(M,\ps R)\right)}{p^{n\dim R}} = \hk(\syz^{i} M).$
Since $\depth (\syz^i M) > 0$, Corollary~\ref{posci} implies that
$\pd_R M < \infty$ if and only if $\hk(\syz^i M) = 0$.
\end{proof}
 
\begin{thm}\label{homcplx}
Let $(R, \m)$ be a local complete intersection of dimension $d > 0$ 
and $F_{\bullet}$ be a complex of finite free modules. 
Let us denote $C_i = \cok(F_i \to F_{i-1})$ and 
assume that , for some $i$,
$\IPD (C_i) \subseteq \{\m\}$ and $H_i(F_{\bullet})$ has finite length. 
Then for $G(X) = \CH_i(F_{\bullet}\otimes X)$ we have
\[\lim\limits_{n\to \infty} \frac{\length(G(\ps R))}{p^{nd}} =0\]
if and only if  $H_i(F_{\bullet})=0$ and $\pd_R C_i <\infty$.
\end{thm}
\begin{proof}
By a theorem of Auslander and its proof (see \cite[Proposition 3.6]{Ha}) there exists an exact sequence of functors
\begin{equation}\label{aus}
\Tor_2^R (C_i, X) \to G(R)\otimes X \to G(X) \to \Tor_1^R (C_i, X) \to 0.
\end{equation}
If $\pd_R C_i < \infty$, then both $\Tor$-modules vanish by Theorem~\ref{tPeskine-Szpiro} 
and one direction follows.

For the converse, if $\lim\limits_{n\to \infty} \frac{\length(G(\ps R))}{p^{nd}} = 0$ then  
$\lim\limits_{n\to \infty} \frac{\length\left (\Tor_1^R(C_i, \ps R) \right)}{p^{nd}} = 0.$
Therefore, $\pd_R C_i < \infty$ and both Tor-modules vanish by Corollary~\ref{hyprigid},
so we have an isomorphism $G(R)\otimes \ps R \cong G(\ps R)$.
But 
\[\lim_{n\to \infty} \frac{\length(G(R) \otimes \ps R)}{p^{nd}} = 0\]
if and only if $G(R) = 0$, {\it i.e.}, when $H_i(F_\bullet) = 0$.
\end{proof}

\begin{cor}\label{Extci}
Let $(R, \m)$ be a local complete intersection of dimension $d > 0$ 
and $M$ be a finitely generated $R$-module such that $\IPD(M) \inc \{\m\}$. Let $\depth M \geq k$.
Then 
\[
\lim_{n \to \infty} \frac{\length \left(\lc^k_{\m}(F^n(M)) \right)}{p^{nd}} = 0  \text{ if and only if }
\lim_{n\to \infty} \frac{\length \left(\Ext^{d - k}_R(M,\ps R) \right)}{p^{nd}} =0\] 
if and only if 
$\pd_R M < d - k.$
\end{cor}

\begin{proof}
The first two assertions are equivalent by Theorem~\ref{extlimit}.
When $\pd_R M < \dim R - k$, the limit is zero by Lemma~\ref{finpdbig}.

For the remaining direction, set $N = \syz^{d - k -1} M$. Then
$N$ is a maximal Cohen-Macaulay module such that  
$\Ext^{d- k}_R(M,\ps R) = \Ext^1_R(N, \ps R)$.
By Theorem~\ref{homcplx}, the first cosyzygy of $N^*$ has finite projective dimension, thus $N$ is free. 
Therefore, $\pd M < \infty$.
\end{proof}

\begin{rmk}
The assumption on $\depth M$ is necessary.
Let  $(R, \m)$ be a regular local ring of dimension $d > 1$ and 
$M =  R \oplus k$. Note that $\pd M = d$, but 
$\lc^1_{\m}(F^n(M)) = \lc^1_{\m} (R \oplus R/\m^{[p^n]}) = 0$ for all $n$.
\end{rmk}

Next we discuss an application on local cohomology of symbolic powers of reflexive ideals. We start with a simple, and perhaps well-known result. 
\begin{prop}
Let $(R, \m)$ be a local ring which satisfies Serre's condition $(S_2)$ and $d=\dim R\geq 3$. Let $M$ be a finitely generated  $R$-module which is locally free on $\Spec R-\{\m\}$.  Then $\lc^2_{\m}(M) \cong \lc^2_{\m}(M^{**})$ and they have finite length. 
\end{prop}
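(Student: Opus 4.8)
The plan is to exploit the exact sequence relating $M$ to its bidual $M^{**}$ together with the fact that the cokernel is supported only at $\m$. Write $\eta\colon M\to M^{**}$ for the canonical map. Since $M$ is locally free — in particular reflexive — on the punctured spectrum, both $\ker\eta$ and $\cok\eta$ are supported at $\{\m\}$, hence have finite length. Breaking $\eta$ into a surjection onto its image $M'$ and an inclusion $M'\hookrightarrow M^{**}$, we get two short exact sequences $\ses{K}{M}{M'}$ and $\ses{M'}{M^{**}}{C}$ with $\length(K),\length(C)<\infty$.

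First I would compute $\lc^i_\m$ of the finite-length pieces: a module of finite length is $\m$-torsion, so $\lc^0_\m(K)=K$, $\lc^0_\m(C)=C$, and $\lc^j_\m(K)=\lc^j_\m(C)=0$ for all $j\geq 1$. Feeding the first sequence into the long exact sequence of local cohomology and using $d\geq 3$ (so $\lc^0_\m$ and $\lc^1_\m$ of $M$ are controlled by depth), I would conclude $\lc^2_\m(M)\cong \lc^2_\m(M')$; indeed the relevant connecting/flanking terms are $\lc^1_\m(M')$-to-$\lc^2_\m(K)=0$ on one side and $\lc^2_\m(K)=0$ feeding in on the other, giving the isomorphism $\lc^2_\m(M)\cong\lc^2_\m(M')$. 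Then, applying the same reasoning to the second sequence $\ses{M'}{M^{**}}{C}$ — here $\lc^1_\m(C)=0$ and $\lc^2_\m(C)=0$ — yields $\lc^2_\m(M')\cong\lc^2_\m(M^{**})$. Composing gives $\lc^2_\m(M)\cong\lc^2_\m(M^{**})$.

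For finiteness of length, the point is that $M^{**}$ satisfies $(S_2)$ as an $R$-module when $R$ does and $M^{**}$ is reflexive (the bidual of a module over an $(S_2)$ ring, locally free on the punctured spectrum, inherits $(S_2)$ by the standard depth bound: $\depth_{R_\p}M^{**}_\p\geq\min(2,\dim R_\p)$ at every $\p$). Consequently $\depth M^{**}\geq 2$, so $\lc^0_\m(M^{**})=\lc^1_\m(M^{**})=0$, and on the punctured spectrum $\lc^2_\m(M^{**})$ vanishes after localizing at any $\p\neq\m$ because there $M^{**}_\p$ is free and $R_\p$ is $(S_2)$ of dimension $\geq 1$ — more precisely $\lc^2_{\m R_\p}$ is identically zero since $\m R_\p$ need not even be proper; the honest statement is that $\lc^2_\m(M^{**})_\p=\lc^2_{\p R_\p}(M^{**}_\p)=0$ for $\dim R_\p\leq 2$ by the $(S_2)$/freeness bound, and for $\dim R_\p\geq 3$ one induces, but in fact it is cleaner to note $\lc^2_\m(M^{**})$ is $\m$-power torsion and finitely generated in each degree, hence the vanishing of its localizations at all $\p\neq\m$ forces finite length. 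The same then holds for $\lc^2_\m(M)$ by the isomorphism already proved.

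The main obstacle is the second sentence of the finiteness argument: making precise why $\lc^2_\m(M^{**})$ has finite length rather than merely being $\m$-torsion. The clean route is the $(S_2)$ property of $M^{**}$: for a reflexive module over an $(S_2)$ ring that is locally free on the punctured spectrum, $\Supp\lc^i_\m(M^{**})\subseteq\{\m\}$ for $i\leq 2$, because at every non-maximal prime $\p$ the module $M^{**}_\p$ is free over the $(S_2)$ local ring $R_\p$, so $\depth_{R_\p}M^{**}_\p=\depth R_\p\geq\min(2,\dim R_\p)$, killing $\lc^0$ and $\lc^1$ locally — and, combined with $M^{**}$ reflexive (hence a second syzygy, so $\depth\geq 2$ globally when $\dim R\geq 2$), one gets that $\lc^2_\m(M^{**})$ is a finitely generated $\m$-torsion module supported at $\{\m\}$, i.e.\ of finite length. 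I expect this depth bookkeeping to be the only genuinely delicate point; everything else is a routine chase through long exact sequences.
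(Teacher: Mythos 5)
Your first half is fine and is essentially the paper's argument: the natural map $M \to M^{**}$ has finite length kernel and cokernel because $M$ is reflexive off $\m$, and the long exact sequence of local cohomology (the flanking terms $\lc^1$, $\lc^2$ of finite length modules vanish) gives $\lc^2_{\m}(M) \cong \lc^2_{\m}(M^{**})$; the appeal to $d \geq 3$ is not needed and is not among the hypotheses, but it does no harm.

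The finiteness argument, however, has a genuine gap. You conclude by saying that $\lc^2_\m(M^{**})$ is ``a finitely generated $\m$-torsion module supported at $\{\m\}$, i.e.\ of finite length,'' and earlier that vanishing of its localizations at all $\p \neq \m$ ``forces finite length.'' Neither step is available: local cohomology modules $\lc^i_\m(N)$ of a finitely generated module are Artinian but in general \emph{not} finitely generated, and every such module is $\m$-power torsion, so its localizations at all $\p \neq \m$ vanish automatically --- this says nothing about length. Concretely, $\lc^d_\m(R)$ for $d = \dim R \geq 1$ satisfies all the properties you invoke and has infinite length. Likewise the depth bound $\depth M^{**} \geq 2$ only kills $\lc^0_\m$ and $\lc^1_\m$; it gives no control of $\lc^2_\m$. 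To make ``supported only at $\m$ implies finite length'' legitimate you must first trade the Artinian local cohomology module for a \emph{finitely generated} one, and this is exactly what the paper does: complete, so that $R$ has a dualizing complex $D_R$, and use local duality to replace $\lc^2_\m(M)$ by its Matlis dual $\CH^{d-2}(\Hom(M,D_R))$, which is finitely generated. For a finitely generated module, showing all localizations at $\p \neq \m$ vanish does prove finite length, and that vanishing is where your hypotheses enter: only primes with $\height \p \geq d-2$ matter for degree reasons, there $M_\p$ is free, and local duality over $R_\p$ turns the stalk into (a direct sum of copies of) $\lc^{\height \p - d + 2}_{\p R_\p}(R_\p)$, which vanishes because $\height \p - d + 2 \leq 1$ and $R$ is $(S_2)$. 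Some duality (or a Grothendieck--Faltings finiteness theorem, which itself needs a dualizing-complex-type hypothesis) is unavoidable here; the purely depth-theoretic bookkeeping you propose cannot yield finiteness.
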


\begin{proof}
There is a natural map $M \to M^{**}$ which has finite length kernel and cokernel. Thus one can easily use the exact sequence of local cohomology to show that $\lc^2_{\m}(M) \cong \lc^2_{\m}(M^{**})$. 

To show that these modules have finite length, 
we can complete and assume that $R$ has a dualizing complex $D_R$. By the local duality, we have to show that $\CH^{d-2}(\Hom(M,D_R))$ has finite length. 
Moreover, localizing at any prime $\p \neq \m$, we only need to concern with the case $\height \p\geq d-2$. 
But then $M_\p \cong R_\p^n$, so
$\CH^{d-2}(\Hom(M,D_R))_{\p}$ is dual to 
$\lc^{\height \p-d+2}_{\p R_{\p}}(R_{\p}^n)$.
However, since $ \height \p-d+2 \leq 1$ and $R$ is $(S_2)$,
$\lc^{\height \p-d+2}_{\p R_{\p}}(R_{\p}^n) = 0$.
\end{proof}

For an ideal $I$, recall that $I^{(n)}$ denotes the $n$th symbolic power of $I$. The second part of the following Theorem is an effective version of the main result of \cite{DLM}. 
Namely, \cite[Theorem~2.15]{DLM} assumes that $\lc^2_{\m}(I^{(p^n)}) = 0$ 
for some $n$, while we assume that the limit is zero. 

\begin{thm}\label{refl}
Let $(R, \m)$ be a local ring satisfying Serre's condition $(S_2)$ and $d=\dim R\geq 3$. Let $I$ be a reflexive ideal that is locally free on $\Spec R-\{\m\}$. Then 
\begin{enumerate}
\item There exist elements $a,b\in R$ such that 
$| \length(\lc^2_{\m}(I^{(p^n)})) - \length({\lc^0_{\m}}(R/(a,b)^{[p^n]})) |$ 
is bounded by a constant. In particular,
$\lim\limits_{n \to \infty} \frac{\length(\lc^2_{\m}(I^{(p^n)}))}{p^{nd}} = 
\lim\limits_{n \to \infty}  \frac{\length(\lc^2_{\m}(I^{\otimes p^n}))}{p^{nd}} = \hk(R/(a,b)).$ 
\item When $R$ is a complete intersection, the limit in part $(1)$ is $0$ if and only if $I$ is principal. In particular, the Picard group of  $\Spec R-\{\m\}$ has no non-trivial torsion elements.
\end{enumerate}
\end{thm}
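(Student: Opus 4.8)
The plan is to transport the length $\length(\lc^2_{\m}(I^{(q)}))$ to the generalized Hilbert--Kunz function of a well-chosen cyclic module and then apply Corollaries~\ref{main}, \ref{34} and \ref{posci}. First I would reduce, as in Remark~\ref{complete}, to $R$ complete (so that $R$ has a canonical module and, in part (2), is written $S/\ul x$ with $S$ regular): completion preserves $(S_2)$, the dimension, and the complete intersection hypothesis, and, being flat with regular fibres, carries $I$ to a reflexive module locally free on the punctured spectrum, without changing the relevant lengths. Next, since $I$ is reflexive of rank one and locally free off $\m$, the natural maps $I^{\otimes q}\to I^{q}\to (I^{q})^{**}=I^{(q)}$ are isomorphisms at every prime $\p\neq\m$, so $I^{\otimes q}$ and $I^{(q)}$ differ only by modules of finite length; as finite length modules are invisible to $\lc^i_{\m}$ for $i\geq 1$, this gives $\lc^2_{\m}(I^{\otimes q})\cong\lc^2_{\m}(I^{(q)})$, which is the middle equality of limits in part (1) (finite length of $\lc^2_{\m}$ here is the Proposition preceding the theorem). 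Note also that $I^{(q)}$ is reflexive and locally free on the punctured spectrum, hence $\depth I^{(q)}\geq 2$ and $\lc^0_{\m}(I^{(q)})=\lc^1_{\m}(I^{(q)})=0$.

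To produce $a,b$, take two general elements $a,b\in I$ with $a$ a non-zerodivisor; by an Eisenbud--Evans type ``generation on the punctured spectrum'' argument (using $\dim R\geq 3$ and that $I$ is locally free of rank one off $\m$), the ideal $(a,b)$ agrees with $I$ at every prime $\neq\m$, so $I/(a,b)$ has finite length, and the same local computation gives $(a^{q},b^{q})^{**}=I^{(q)}$ because both sides are reflexive and agree in codimension $\leq 1$. In particular $R/(a,b)$ is locally $R_{\p}/(\text{non-zerodivisor})$ off $\m$, so $\IPD(R/(a,b))\inc\{\m\}$ and $\hk(R/(a,b))$ exists by Corollaries~\ref{main} and \ref{34}. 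Feeding the exact sequences $0\to(a^{q},b^{q})\to I^{(q)}\to C_{q}\to 0$ (with $C_{q}=I^{(q)}/(a^{q},b^{q})$ of finite length), $0\to I^{(q)}\to R\to R/I^{(q)}\to 0$, and $0\to R/((b^{q}):a^{q})\xrightarrow{a^{q}}R/(b^{q})\to R/(a^{q},b^{q})\to 0$ into the long exact sequences for local cohomology, and using $\lc^i_{\m}(I^{(q)})=0$ for $i\leq 1$ together with the fact that $\lc^i_{\m}(R)$ is a fixed finite length module for $i\leq 2$, one bounds $|\length(\lc^2_{\m}(I^{(q)}))-\length(\lc^0_{\m}(R/(a^{q},b^{q})))|=|\length(\lc^2_{\m}(I^{(q)}))-\fhk{R/(a,b)}(n)|$ by a constant; this, with the existence of $\hk(R/(a,b))$, gives part (1).

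For part (2): if $I=(c)$ is principal then $c$ is a non-zerodivisor, $I^{(q)}\cong R$, and $\length(\lc^2_{\m}(I^{(q)}))=\length(\lc^2_{\m}(R))$ is bounded, so the limit is $0$. Conversely, if the limit is $0$ then $\hk(R/(a,b))=0$; since $\IPD(R/(a,b))\inc\{\m\}$ and $R$ is a complete intersection, Corollary~\ref{posci} forces $\depth R/(a,b)>0$ and $\pd_{R}R/(a,b)<\8$. The depth condition kills the finite length submodule $I/(a,b)$ of $R/(a,b)$, so $(a,b)=I$; thus $I$ is a two-generated ideal of finite projective dimension, and over a complete intersection this forces $I$ to be free, i.e.\ principal. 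For the final assertion: if $[I]\in\Cl(R)$ is torsion of order $t$, then, since the isomorphism class of a reflexive rank-one module is its class-group element, $I^{(p^{n})}\cong I^{(p^{n'})}$ whenever $p^{n}\equiv p^{n'}\pmod{t}$; as $p^{n}\bmod t$ takes only finitely many values, $\{\length(\lc^2_{\m}(I^{(p^{n})}))\}_{n}$ is bounded, so the limit is $0$, and by the above $I$ is principal, i.e.\ $[I]=0$.

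The hard part is twofold: (i) the bounded-difference estimate in part (1) — getting $O(1)$ rather than merely $o(q^{d})$ requires the careful choice of $a,b$ above and a genuine analysis of $\lc^0_{\m}$ versus $\lc^1_{\m}$ of $R/(a^{q},b^{q})$, exploiting that $a$ is a non-zerodivisor so that $R/(a^{q},b^{q})$ is assembled from $R/(b^{q})$ and $R/((b^{q}):a^{q})$; and (ii) the step ``$R/(a,b)$ of finite projective dimension $\Rightarrow I$ principal'' over a complete intersection, which rests on the classical fact that over a local complete intersection a reflexive ideal of rank one with finite projective dimension is principal (in the present setting it can be extracted from Corollary~\ref{posci} together with a syzygy-and-rank count).
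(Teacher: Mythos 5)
The gap is in part (1), in your choice of $a,b$ and in the comparison you build on it. The paper does not take $a,b\in I$: it writes $I$ as a link, $I=(a):(b)$ (as in the proof of Theorem 2.9 of \cite{DLM}), precisely because this choice gives $I^{(q)}=(a^q):(b^q)$ for every $q$, hence the exact sequence $0\to R/I^{(q)}\xrightarrow{b^q} R/(a^q)\to R/(a,b)^{[q]}\to 0$. From that sequence $\lc^0_{\m}(R/(a,b)^{[q]})$ embeds into $\lc^1_{\m}(R/I^{(q)})$ with cokernel inside $\lc^1_{\m}(R/(a^q))\subseteq\lc^2_{\m}(R)$, and $\lc^1_{\m}(R/I^{(q)})$ differs from $\lc^2_{\m}(I^{(q)})$ by at most $\length\lc^2_{\m}(R)$; since $\lc^2_{\m}(R)$ has finite length (the Proposition before the theorem, applied to $M=R$), all errors are bounded by one constant. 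With your $a,b\in I$ none of this is available. Locally at $\p\neq\m$ write $I_\p=(c)$, $a=uc$, $b=vc$ with $(u,v)R_\p=R_\p$; then $(b^q):(a^q)$ localizes to $(v^q):(u^q)$, which has nothing to do with $I^{(q)}_\p=(c^q)$, so your third sequence never brings $I^{(q)}$ into play. In fact one can identify your quantity exactly: from $\ses{(a^q,b^q)}{R}{R/(a^q,b^q)}$ and $\depth R\geq 2$ one gets $\lc^0_{\m}(R/(a^q,b^q))\cong\lc^1_{\m}((a^q,b^q))$, and from $\ses{(a^q,b^q)}{I^{(q)}}{C_q}$ together with $\lc^0_{\m}(I^{(q)})=\lc^1_{\m}(I^{(q)})=0$ this is $\cong C_q=I^{(q)}/(a^q,b^q)$. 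So $\fhk{R/(a,b)}(n)$, for your $a,b$, computes the symbolic-versus-bracket-power quotient of the chosen generators, an entirely different invariant from $\length\lc^2_{\m}(I^{(q)})$, and none of the sequences you list bounds the difference between the two; the claimed $O(1)$ (or even $o(q^d)$) estimate is exactly what is missing.

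There is also a problem one step earlier: such a pair $a,b$ need not exist. For a rank-one module that is locally principal only off $\m$, the Eisenbud--Evans/Forster--Swan bound for generating at every $\p\neq\m$ is $\max_{\p\neq\m}\bigl(\mu(I_\p)+\dim R/\p\bigr)=d$, not $2$; two general elements of $I$ fail to generate along a closed set of codimension at most $2$, which is nonempty of dimension $\geq d-2\geq 1$ exactly because $d\geq 3$, so the hypothesis you invoke works against you. A link $I=(a):(b)$, by contrast, always exists (take $a\in I$ generating $I$ at its minimal primes, write $(a)=I\cap J$ in a primary decomposition and choose $b\in J$ outside the minimal primes of $I$). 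The remaining scaffolding of your write-up --- the identification $\lc^2_{\m}(I^{\otimes q})\cong\lc^2_{\m}(I^{(q)})$ via the preceding Proposition, the appeal to Corollary~\ref{34} for existence of the limit, and part (2) via Corollary~\ref{posci} (the paper uses \ref{hyprigid}) plus a MacRae-type argument and periodicity of $I^{(q)}$ for torsion classes --- does match the paper, but it all rests on the part-(1) comparison, which as written is not established.
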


\begin{proof}
First we note that  $\length(\lc^2_{\m}(I^{(p^n)})) = \length (\lc^2_{\m}(I^{\otimes p^n}))$ follows from the previous Proposition and the fact that $I^{(n)} \cong (I^{\otimes n})^{**}$.

The proof of \cite[Theorem 2.9]{DLM} shows that there are elements $a, b$ such that
$I=(a):(b)$ and, furthermore, $I^{(n)}=(a^n):b^n$.
Taking local cohomology of 
$
0 \to R/(a^n:b^n) \to R/(a^n) \to R/(a^n,b^n) \to 0
$
we get an exact sequence 
\[0 \to \lc_{\m}^0(R/(a,b)^{[p^n]}) \to \lc^1_{\m}(R/I^{(p^n)}) \to \lc^1_{\m}(R/(a^{p^n})),\]
which shows that
\[
0 \leq \length (\lc^1_{\m}(R/I^{(p^n)})) -  \length(\lc_{\m}^0(R/(a,b)^{[p^n]})) \leq 
\length(\lc^1_{\m}(R/(a^{p^n}))) \leq \length (\lc^2_{\m}(R)) < \infty,
\]
where the second inequality follows by taking local cohomology of 
$\ses{R}{R}{R/(a^{p^n})}$. 
Similarly, by taking local cohomology of $\ses{I^{(p^n)}}{R}{R/I^{(p^n)}}$ we may get that 
\[
0 \leq \length(\lc^2_{\m}(I^{(p^n)})) - \length(\lc^1_{\m}(R/I^{(p^n)})) \leq \length(\lc^2_{\m}(R))
\] 
and the first statement follows. Note that $\hk(R/(a,b))$ exists by 
Corollary \ref{34}.

For the second part consider the exact sequence 
\[0 \to \Tor_1^R (R/I, \ps R) \to F^n(I) \to  R \to R/I^{[p^n]} \to 0\]
arising from the tensor product with $\ps R$. 
Because  $\Tor_1^R (R/I, \ps R)$ has finite length, the sequence shows that
$\lc^2_{\m}(F^n(I)) \cong \lc^2_{\m}(I^{[p^n]}) = \lc^2_{\m}(I^{(p^n)})$ 
and we may apply Corollary~\ref{Extci}. 
If $I$ represents a torsion elements in the Picard group of  $\Spec R-\{\m\}$, then $\lc^2_{\m}(I^{(p^n)})$ must be periodic, hence 
$
\lim\limits_{n \to \infty} \frac{\length \left(\lc^2_{\m}(I^{(p^n)}) \right)}{p^{3n}}=0.
$

\end{proof}

\begin{eg}\label{need gor}
We can use Theorem~\ref{refl} to demonstrate that Corollary~\ref{posci}
may not hold if $R$ is merely Gorenstein. 
Namely, it is easy to find examples of Gorenstein isolated singularity with torsion Picard group, e.g.,  
a suitable Veronese subring (\cite[Example 3.2]{DLM}).
\end{eg}

\section{More precise behavior of  $\fhk M(n)$}\label{pre}

Numerical experiments suggest that when $R$ has an isolated singularity, the behavior of $\fhk M(n)$ follows the case of classical Hilbert--Kunz functions. We discuss this phenomenon and establish some  special cases.

Let $M$ be a finite module over a local hypersurface $(R, \m)$ of dimension $d$. 
If $\IPD(M) \inc \{\m\}$, then we can define Hochster's theta function (\cite{Hochster}) 
\[\theta^R(M, X) = \length(\Tor_{2d}^R(M, X)) -\length (\Tor_{2d+1}^R(M, X))\]
for any module $X$.
The following lemma is implicitly contained in~\cite{Dao}. 
We place a proof here for convenience, but refer to the paper for more information.

\begin{lemma}\label{theta}
Let $(R, \m)$ be a local hypersurface and $M$ be a finite $R$-module with  $\IPD(M) \inc \{\m\}$.
Then $\theta (M, \ps R) = 0$.
\end{lemma}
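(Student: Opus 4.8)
The plan is to exploit the two key features of $\theta^R$ over a hypersurface: it is additive on short exact sequences in the second variable (and in the first), and it vanishes as soon as one argument has finite projective dimension. First I would reduce to the case $\dim R > 0$; when $\dim R = 0$ the module $\ps R$ has finite length and finite projective dimension is not available, but one can instead observe that $\ps R \cong R$ as an abelian group and the Tor modules in question eventually agree with those of a free module, so $\theta$ vanishes for length reasons — actually the cleanest reduction is via Remark~\ref{complete}, allowing us to assume $R$ is complete and $F$-finite. Then, since $\IPD(M) \inc \{\m\}$, the module $M$ is locally free on the punctured spectrum, so $\Tor_i^R(M, X)$ has finite length for all $i > 0$ and all finitely generated $X$; in particular $\theta^R(M, X)$ is well-defined and, being the alternating difference of lengths of two consecutive Tor's in the eventually $2$-periodic tail of the resolution, it is independent of which large even index $2d$ we use.

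The heart of the argument is to feed the Frobenius into the second slot. By a theorem of Peskine–Szpiro (cited in the excerpt), $\Tor_i^R(P, \ps R) = 0$ for all $i > 0$ whenever $\pd_R P < \infty$; hence $\theta^R(-, \ps R)$ kills every module of finite projective dimension. Over a hypersurface, every finitely generated module $M$ has a finite-projective-dimension "tail modification": writing a minimal resolution, the high syzygy $\syz^j M$ for $j \gg 0$ is maximal Cohen–Macaulay, and for MCM modules over a hypersurface the resolution is $2$-periodic (Eisenbud). So I would replace $M$ by a sufficiently high syzygy $N = \syz^{2d} M$, using additivity of $\theta^R(-,\ps R)$ along the defining short exact sequences $\ses{\syz^{i+1}M}{F_i}{\syz^i M}$ together with the vanishing on the free modules $F_i$, to get $\theta^R(M, \ps R) = \pm\,\theta^R(N, \ps R)$. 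Now $N$ is MCM, so its resolution is periodic: $\Tor_1^R(N, \ps R) \cong \Tor_3^R(N, \ps R) \cong \cdots$ and likewise in even degrees, giving $\theta^R(N, \ps R) = \length \Tor_{2d}^R(N,\ps R) - \length \Tor_{2d+1}^R(N,\ps R)$ with both terms computed from the same periodic complex.

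To conclude that this difference is zero, I would use the second variable this time. Consider the defining sequence $\ses{\syz^1 N}{F_0}{N}$ of $N$ itself, but applied in the Frobenius slot is not directly available; instead the cleaner route is the one used elsewhere in the paper (Corollary~\ref{hyprigid} / Lemma~\ref{torsyz}): over a complete intersection, $\lim_n \length(\Tor_i^R(N, \ps R))/q^d = \hk(\syz^i N)$, and the rigidity of $\ps R$ forces, for a periodic module, $\Tor_{\text{even}}$ and $\Tor_{\text{odd}}$ to have the same length — this is exactly the statement that the Herbrand difference / $\theta$ vanishes on $\ps R$, which is what \cite{Dao} establishes by showing $\theta^R(N, X)$ factors through $\bar\Tor$ and that $\ps R$ represents the zero class in the relevant Grothendieck group because $R \to \ps R$ has finite flat dimension in a suitable derived sense. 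The main obstacle is precisely this last step: proving that $\ps R$ is "$\theta$-trivial", i.e., that $\Tor_{2d}^R(N,\ps R)$ and $\Tor_{2d+1}^R(N,\ps R)$ have equal length. I expect the proof to invoke the known vanishing of Hochster's theta pairing against modules of finite projective dimension together with the fact that, over a hypersurface, $\ps R$ admits a filtration (or is linked to) free modules in the stable category — or, most directly, to cite the relevant computation from \cite{Dao} that $\theta^R(-, \ps R)$ is identically zero because the class of $\ps R$ in $\bar{G}_0(R)$ vanishes, since $\ps R$ is, up to free summands and finite-length error terms controlled by $\IPD(M) \inc \{\m\}$, a flat pullback that does not change the stable class.
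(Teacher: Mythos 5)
There is a genuine gap, and it sits exactly where you flag it: the whole argument is reduced to the claim that $\ps R$ is ``$\theta$-trivial,'' but that claim is never proved, and the heuristics you offer in its place do not work. Rigidity of $\ps R$ over a complete intersection says that vanishing of one $\Tor$ forces vanishing of the later ones; it says nothing about two consecutive nonzero $\Tor$'s having \emph{equal length}, so it cannot yield $\length \Tor_{2d}^R(N,\ps R)=\length\Tor_{2d+1}^R(N,\ps R)$. Likewise the statement that $\ps R$ is, ``up to free summands and finite-length error terms, a flat pullback that does not change the stable class'' is unjustified and essentially false as a general principle: the Frobenius is not flat on a singular hypersurface, $\ps R$ need not be stably trivial, and the correct statement is not that $[\ps R]$ equals $[R]$ but that $[\ps R]=q^d[R]$ in the Grothendieck group --- a genuinely nontrivial fact special to complete intersections (it is proved via singular Riemann--Roch/localization-theorem techniques and can fail beyond that class), so no formal ``pullback preserves the class'' argument can substitute for it.

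For comparison, the paper's proof is two lines and bypasses your syzygy reduction entirely: since $\IPD(M)\inc\{\m\}$, all the relevant $\Tor$'s have finite length and $\theta^R(M,-)$ is additive on short exact sequences in the second variable, hence descends to a function on the Grothendieck group $G_0(R)$; by Kurano's result (Remark~2.8 of \cite{Kurano}) one has $[\ps R]=q^d[R]$ there, whence $\theta^R(M,\ps R)=q^d\,\theta^R(M,R)=0$ because $R$ is free. Your reduction to a maximal Cohen--Macaulay syzygy $N$ with $\theta^R(M,\ps R)=\pm\theta^R(N,\ps R)$ is correct but unnecessary once one works in $G_0(R)$; the missing ingredient in your write-up is precisely the cited Grothendieck-group identity, and without it (or an equivalent input such as Dutta's/Kurano's Riemann--Roch computation for the Frobenius) the proof does not close.
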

\begin{proof}
We consider $\theta (M, -)$ as a function on the Grothendieck group of $R$.	
By \cite[Remark 2.8]{Kurano}) $[\ps R] = p^{n\dim R}[R]$ in the Grothendieck group, so
$\theta  (M, \ps R) = p^{n\dim R} \theta (M, R) = 0$.
\end{proof}

\begin{thm}\label{pre1}
Let $(R, \m)$ be a local hypersurface with perfect residue field  
and $M$ be a finite $R$-module with $\depth M>0$ and $\IPD(M) \inc \{\m\}$. 
Then there are modules $M_1,M_2$ of finite length such that
\[\fhk M(n) = \frac{\fhk {M_1}(n)-\fhk {M_2}(n)}{2}.\]
\end{thm}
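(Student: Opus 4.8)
The plan is to express $\fhk M(n)$ in terms of Hochster's theta function, which Lemma~\ref{theta} forces to vanish, and then to realize the two pieces of the resulting identity as generalized Hilbert--Kunz functions of finite length modules. First I would use the hypothesis $\depth M>0$ to identify $\lc^0_\m(F^n(M))$ with a Tor module: applying $\ps R\otimes_R-$ to a short exact sequence $\ses{\syz M}{F}{M}$ (with $F$ free) gives, as in Lemma~\ref{torsyz}, an isomorphism $\lc^0_\m(F^n(M))\cong\lc^0_\m(\Tor_1^R(\syz^{?}(-),\ps R))$ type statements; more to the point, over a hypersurface $R$ the minimal resolution of $M$ is eventually $2$-periodic, so for $i\gg 0$ one has $\Tor_i^R(M,\ps R)\cong\Tor_1^R(N,\ps R)$ for a high syzygy $N$, and $\length(\Tor_j^R(M,\ps R))<\infty$ for all $j>0$ since $\IPD(M)\inc\{\m\}$ and Peskine--Szpiro gives vanishing after localizing. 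The key input is that over a hypersurface $\theta^R(M,-)$ depends only on $M$ up to even/odd shifts and, by Lemma~\ref{theta}, $\theta^R(M,\ps R)=\length(\Tor_{2d}^R(M,\ps R))-\length(\Tor_{2d+1}^R(M,\ps R))=0$.

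Next I would set up two finite length modules. Because $R$ is a hypersurface, a maximal Cohen--Macaulay module over $R$ is given by a matrix factorization, and its dual and cosyzygies are again MCM; applying Auslander--Bridger (as used in the proof of Proposition~\ref{edci}) or Cohen--Macaulay approximation (\cite{AB,AB}) one writes $M$, up to free summands which do not affect $\lc^0_\m(F^n(-))$, so that $\Tor_1^R(N,\ps R)$ and $\Tor_2^R(N,\ps R)$ are computed from the two maps of a periodic resolution. The idea is that for a matrix factorization $(\varphi,\psi)$ with $\varphi\psi=\psi\varphi=x\cdot\mathrm{id}$ (where $R=S/x$), the modules $\cok\varphi^{[q]}$ and $\cok\psi^{[q]}$ are what appear as $F^n$ of the cokernels, and $\length(\lc^0_\m(F^n(M)))$ becomes $\length$ of the homology of $S/x\otimes$ (twisted Koszul-type) complex. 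Concretely, I expect to produce $M_1=\cok(\varphi)\oplus\cok(\psi)$ or a closely related finite length module obtained from $\Tor_1$ and $\Tor_2$ of $N$, and $M_2$ from the "wrong direction" maps, arranged so that $\fhk{M_1}(n)+\fhk{M_2}(n)=2\,\length(\Tor_1^R(N,\ps R))$ while $\fhk{M_1}(n)-\fhk{M_2}(n)=2\theta$-type term. Combined with $\fhk M(n)=\length(\Tor_1^R(N,\ps R))$ (via $\depth M>0$) and $\theta=0$, this gives the stated formula $\fhk M(n)=\tfrac12(\fhk{M_1}(n)-\fhk{M_2}(n))$.

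The main obstacle, I expect, is bookkeeping: making the identification $\fhk M(n)=\length(\Tor_1^R(N,\ps R))$ precise (this uses $\depth M>0$ to kill $\lc^0_\m$ of the image module, exactly as in Lemma~\ref{torsyz}, plus possibly adding free summands and passing to a syzygy, which may shift indices by the period $2$), and then matching the theta function's two terms $\Tor_{2d}$ and $\Tor_{2d+1}$ with the two modules $M_1,M_2$ in such a way that both are manifestly of finite length and both have the form $\lc^0_\m(F^n(-))$ of an actual module rather than merely a Tor. The latter requires recognizing $\Tor_i^R(N,\ps R)$ for a MCM module $N$ over a hypersurface as $\lc^0_\m(F^n(L))$ for a finite length $L$ built from the matrix factorization of $N$; this is the one spot where the hypersurface (rather than complete intersection) hypothesis and the $2$-periodicity are genuinely used. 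The perfect residue field hypothesis enters only to apply Seibert's theorem and Remark~\ref{complete}-type reductions if needed, but for the bare identity of functions it should not be essential beyond ensuring the constructions go through.
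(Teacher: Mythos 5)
Your opening reduction is essentially the paper's: via Cohen--Macaulay approximation $\ses MQL$ with $\pd Q<\infty$, $L$ maximal Cohen--Macaulay, and $\depth Q>0$, one gets $\fhk M(n)=\length(\Tor_1^R(L,\ps R))$, and Lemma~\ref{theta} together with $2$-periodicity is indeed the key input. But the heart of the proof --- actually producing finite length modules $M_1,M_2$ with the stated identity --- is missing, and the system of identities you propose in its place is inconsistent. You ask for $\fhk{M_1}(n)-\fhk{M_2}(n)=2\theta^R(\,\cdot\,,\ps R)$, which vanishes by Lemma~\ref{theta}; then $\tfrac12(\fhk{M_1}(n)-\fhk{M_2}(n))=0$, whereas $\fhk M(n)=\length(\Tor_1^R(L,\ps R))$ is in general nonzero. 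What you actually need is $\fhk{M_1}(n)-\fhk{M_2}(n)=2\length(\Tor_1^R(L,\ps R))$; the vanishing of $\theta^R(L,\ps R)$, combined with the $2$-periodicity of the resolution of $L$, is used only to conclude that consecutive Tor lengths agree, $\length(\Tor_{i+1}^R(L,\ps R))=\length(\Tor_i^R(L,\ps R))$ for $i\geq 1$. Your matrix-factorization suggestion for $M_1,M_2$ also does not deliver finite length modules as stated: $\cok\varphi$ and $\cok\psi$ are maximal Cohen--Macaulay of dimension $d$, and you acknowledge without resolving the problem of realizing $\Tor_i^R(L,\ps R)$ as $\lc^0_\m(F^n(-))$ of a finite length module.

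The paper's mechanism for this step is different and is what your proposal lacks. By the result of Dao--Veliche cited as \cite{DV}, there is an $\m$-primary ideal $I$ annihilating $\Tor_i^R(L,-)$ for all $i>0$; choosing $x\in I$ regular on $L$, the sequence $0\to L\xrightarrow{x}L\to L/xL\to 0$ yields
\[0\to\Tor_{i+1}^R(L,\ps R)\to\Tor_{i+1}^R(L/xL,\ps R)\to\Tor_i^R(L,\ps R)\to 0,\]
so $\length(\Tor_{i+1}^R(L/xL,\ps R))=2\length(\Tor_i^R(L,\ps R))$ by the equality of consecutive Tor lengths above; this is where the factor $2$ comes from. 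Iterating with an $L$-regular sequence in $I$ reduces to a module of finite length, and a separate lemma then converts $\length(\Tor_i^R(-,\ps R))$ of a finite length module into a difference of generalized Hilbert--Kunz functions: present it as $\ses{N}{(R/\ul xR)^m}{-}$ with $\ul x$ a regular sequence in the annihilator and take $M_1$, $M_2$ from the resulting four-term sequence (e.g.\ $M_1=M\oplus N$, $M_2=(R/\ul xR)^m$ when $i=1$, and induct for higher $i$). Without this (or an equivalent) construction, your outline does not yet constitute a proof.
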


\begin{proof}
The Auslander--Buchweitz approximation (\cite[1.8]{ABu}) of $M$ is an exact sequence
$\ses MQL,$
where $\pd Q < \infty$ and $L$ is maximal Cohen-Macaulay.
Tensoring with $\ps R$ we obtain the exact sequence
\[0 \to \Tor_1^R(L, \ps R)\to F^n(M) \xrightarrow{g} F^n(Q)\to F^n(L) \to 0.\]
Since $\Tor_1^R(L, \ps R)$ has finite length,  we have the exact sequence of local cohomology
\[0 \to \Tor_1^R (L, \ps R) \to \lc^0_\m (F^n(M)) \to \lc^0_\m (F^n(Q)) = 0,\] 
where we used that $\depth F^n(Q) = \depth Q > 0$.
Hence it is enough to prove the statement replacing $\fhk M$ by $\length(\Tor_1^R(L,\ps R))$.  

Since $\IPD(M) \subseteq \{\m\}$, $L$ is locally free on the punctured spectrum.
Thus there is an $\m$-primary ideal $I$ ({\it e.g.}, by adapting the proof of \cite[Lemma~4.3]{DV}) 
that kills all functors $\Tor_i^R(L,-)$ for $i>0$.
Choose an $L$-regular element $x \in I$, then the sequence 
$0 \to L \xrightarrow{x} L \to L/xL\to 0$ 
gives that:
\[0 \to  \Tor_{i+1}^R(L,\ps R) \to \Tor_{i+1}^R(L/xL,\ps R) \to \Tor_{i}^R(L,\ps R) \xrightarrow{x} 0 \]
for $i\geq 1$. 

By the previous lemma and the fact that the minimal resolution of $L$ is $2$-periodic (\cite[Theorem~6.1]{Eisenbud}), 
we get that $\length(\Tor_{i+1}^R(L/xL,\ps R)) = 2\length(\Tor_{i}^R(L,\ps R))$.
Now, $L/xL$ still has finite projective dimension on the punctured spectrum and its minimal resolution is still $2$-periodic for $i>1$, so
we can choose another element $y\in I$ regular on $L/xL$ and apply the same argument.
This way, we continue choosing $L$-regular sequence $\ul{x}$ in $I$ to get 
that   $\length(\Tor_{i+d}^R(L/\ul{x}L,\ps R)) = 2^d\length(\Tor_{i}^R(L,\ps R))$ and the result follows from the next lemma.
\end{proof}

\begin{lemma}
Let $R$ be a Cohen-Macaulay local ring and $M$ be a module of finite length. Then for each $i > 0$ there are finite length modules $M_1,M_2$ such that
\[\length(\Tor_i^R(M,\ps R)) = \fhk {M_1}(n)-\fhk {M_2}(n).\]

\end{lemma}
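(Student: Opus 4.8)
The statement to prove is: over a Cohen-Macaulay local ring $R$, for a finite length module $M$ and each $i > 0$, there are finite length modules $M_1, M_2$ with $\length(\Tor_i^R(M, \ps R)) = \fhk{M_1}(n) - \fhk{M_2}(n)$. The plan is to reduce $\Tor_i^R(M, \ps R)$ to $\lc^0_\m$ of Frobenius-twisted modules by realizing $\Tor_i$ as the difference of two $F^n$-functors applied to finite length modules, exploiting that $\lc^0_\m(F^n(N)) = F^n(N)$ when $N$ has finite length (since then $F^n(N) = N \otimes_R \ps R$ is again finite length, hence $\m$-power torsion).

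First I would take a minimal free resolution $P_\bullet \to M$ and break it into short exact sequences $\ses{\syz^{j+1}M}{P_j}{\syz^j M}$. Applying $F^n(-) = - \otimes_R \ps R$, which is right exact, and using the long exact sequence for $\Tor$, one gets $\Tor_i^R(M, \ps R) \cong \Tor_1^R(\syz^{i-1}M, \ps R)$ for $i > 0$; and from $\ses{\syz^i M}{P_{i-1}}{\syz^{i-1}M}$ one has the four-term exact sequence $0 \to \Tor_1^R(\syz^{i-1}M, \ps R) \to F^n(\syz^i M) \xrightarrow{g} F^n(P_{i-1}) \to F^n(\syz^{i-1}M) \to 0$. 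So it suffices to handle $i = 1$, i.e. to express $\length(\Tor_1^R(N, \ps R))$ as such a difference for an arbitrary finitely generated $N$ whose first syzygy into a free module we control; here $N = \syz^{i-1}M$, which need not have finite length, so the reduction must be done carefully — but the point is that $\Tor_1^R(N, \ps R)$ itself has finite length because $N$ is free on the punctured spectrum (as $M$ has finite length) and then Peskine--Szpiro gives finite length Tor.

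Next, with $K = \im(g) \subseteq F^n(P_{i-1})$, the exact sequence splits as $0 \to \Tor_1^R(N, \ps R) \to F^n(N') \to K \to 0$ (where $N' = \syz^i M$) and $0 \to K \to F^n(P_{i-1}) \to F^n(N) \to 0$. From the additivity of length, $\length(\Tor_1^R(N, \ps R)) = \length(F^n(N')) - \length(K)$ and $\length(K) = \length(F^n(P_{i-1})) - \length(F^n(N))$, but these individual lengths are generally infinite, so instead I would pass to local cohomology: apply $\lc^0_\m$ to both sequences. Since $\depth P_{i-1} > 0$ when $\dim R > 0$, $\lc^0_\m(K)$ injects into $\lc^0_\m(F^n(P_{i-1})) = 0$, so $\lc^0_\m(K) = 0$ and hence $\Tor_1^R(N, \ps R) = \lc^0_\m(\Tor_1^R(N,\ps R)) \cong \lc^0_\m(F^n(N'))$, recovering Lemma~\ref{torsyz}. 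This alone gives $\length(\Tor_i^R(M, \ps R)) = \fhk{\syz^i M}(n)$, which is of the form $\fhk{M_1}(n) - \fhk{M_2}(n)$ with $M_1 = \syz^i M$ and $M_2 = 0$ — \emph{except} that $\syz^i M$ need not have finite length. To fix this I would instead, since $R$ is Cohen-Macaulay, replace $\syz^i M$ by a Cohen-Macaulay approximation or more simply choose a sufficiently high syzygy and cut down by a regular sequence: because $\Tor_i^R(M,-)$ is annihilated by an $\m$-primary ideal $I$ (as $M$ has finite length), pick a maximal $R$-regular sequence $\ul x \subseteq I$; then using $\ses{L}{L}{L/xL}$ repeatedly (with $L = \syz^i M$) and the vanishing $\Tor_{>0}^R(L, -)$ after multiplication, each step doubles or shifts a Tor, and after $\dim R$ steps one arrives at a finite length module. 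The difference $\fhk{M_1}(n) - \fhk{M_2}(n)$ then appears from the two-term filtration pieces in the long exact sequences, exactly as in the proof of Theorem~\ref{pre1}.

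The main obstacle is the finite-length requirement on $M_1$ and $M_2$: the naive identification $\length(\Tor_i^R(M, \ps R)) = \fhk{\syz^i M}(n)$ uses a syzygy module that is typically not of finite length, so the real content is the regular-sequence descent (available because $R$ is Cohen-Macaulay, so maximal regular sequences in an $\m$-primary ideal have length $\dim R$) that converts a high syzygy into something finite-dimensional while only introducing an explicit signed combination of $\fhk{}$-functions. I expect the bookkeeping of which syzygy to start from, and verifying that the regular sequence on $R$ is also regular on the relevant syzygy module, to be the delicate points; everything else is the standard local-cohomology exact-sequence manipulation already used above.
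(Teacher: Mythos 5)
Your first reduction is fine but, as you yourself note, it does not prove the statement: $\length(\Tor_i^R(M,\ps R)) = \fhk{\syz^i M}(n)$ has the wrong kind of module on the right, and everything then hinges on the proposed ``regular-sequence descent.'' That descent is where the gap is. The mechanism you invoke (``each step doubles or shifts a Tor, exactly as in the proof of Theorem~\ref{pre1}'') is specific to hypersurfaces: the proof of Theorem~\ref{pre1} needs an $\m$-primary ideal killing all stable Tor modules of an MCM module (Dao--Veliche), the $2$-periodicity of its resolution, and the vanishing $\theta^R(L,\ps R)=0$ of Lemma~\ref{theta} to convert $\length\Tor_{i+1}(L/xL,\ps R)=\length\Tor_{i+1}(L,\ps R)+\length\Tor_i(L,\ps R)$ into a clean doubling. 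None of this is available over a general Cohen--Macaulay ring. Concretely, if $L=\syz^i M$ and $x\in\Ann M$ is $R$-regular, tensoring $0\to L\xrightarrow{x}L\to L/xL\to 0$ with $\ps R$ gives a long exact sequence whose connecting maps are multiplication by $x^q$; while $x^q$ does kill $\Tor_{\geq 1}(L,\ps R)$, the sequence also produces the term $\ker\bigl(x^q\colon F^n(L)\to F^n(L)\bigr)$, which is a finite-length module that is not of the form $\fhk{X}(n)$ for any $X$, and without periodicity nothing cancels. So the descent, as described, does not terminate in an identity of the required shape. (Also, note that Theorem~\ref{pre1} in the paper is proved \emph{using} this lemma, so leaning on its proof here is close to circular.)

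The missing idea is to choose the regular sequence \emph{before} presenting $M$, and to present $M$ over the Artinian quotient rather than over $R$. Since $R$ is Cohen--Macaulay, pick a maximal regular sequence $\ul x\subseteq\Ann M$ (of length $d$), and take a surjection $(R/\ul x R)^m\twoheadrightarrow M$ with kernel $N$; then $N$ and $(R/\ul x R)^m$ are of finite length, and $\Tor_j^R((R/\ul x R)^m,\ps R)=0$ for $j>0$ because $\ul x^{[q]}$ is again a regular sequence. Tensoring $\ses{N}{(R/\ul x R)^m}{M}$ with $\ps R$ gives, for $i=1$,
\[
0 \to \Tor_1^R(M,\ps R) \to F^n(N) \to F^n((R/\ul x R)^m) \to F^n(M) \to 0,
\]
an exact sequence of finite-length modules (on which $\lc^0_\m$ is the identity), so additivity of length yields $\length(\Tor_1^R(M,\ps R))=\fhk{M\oplus N}(n)-\fhk{(R/\ul x R)^m}(n)$; for $i>1$ one has $\Tor_i^R(M,\ps R)\cong\Tor_{i-1}^R(N,\ps R)$ and induction on $i$ applies since $N$ again has finite length. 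This one-step presentation over $R/\ul x$ is exactly what replaces your attempted post-hoc cutting of $\syz^i M$.
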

\begin{proof}
Let $\underline x$ be a regular sequence in the annihilator of $M$. Then there is an exact sequence 
\[\ses{N}{(R/\underline xR)^m}{M}.\]
If $i = 1$,  tensoring with  $\ps R$, we obtain the exact sequence
\[0 \to \Tor_1^R(M, \ps R) \to F^n(N) \to F^n((R/\ul{x})^m) \to F^n(M) \to 0,\]
so we can take $M_1=M\oplus N$ and $M_2=(R/\underline xR)^m$. 
For $i > 1$, we get that $\Tor_{i}^R(M,\ps R) \cong \Tor_{i-1}^R(N,\ps R)$, and the result follows by induction. 
\end{proof}

%

\specialsection*{ACKNOWLEDGEMENTS}
We thank Craig Huneke and University of Virginia for creating the opportunity for us to work together! 
We thank Holger Brenner, Srikanth Iyengar, Ryo Takahashi and Kei-ichi Watanabe for many helpful conversations. 
We thank the anonymous referee for valuable comments.

\bibliographystyle{alpha}
\bibliography{gHK}

\appendix \section{Numerical evidences and examples}

In this section we collect and comment on some numerical evidences involving $\hk(M)$. Most of our examples here were performed using the software MACAULAY 2. This numerical data helped us formulate the results in Section \ref{pre} and in addition  has inspired several works since our preprint became available (\cite{BreCa, DS, DW, Vra}). 

\begin{eg}
Let $R = k[[x,y,u,v]]/(xy-uv)$ where $k$ is algebraically closed of characteristic $p>2$. 
For a module $M$ of positive depth, We can compute $\fhk M(n)$ as follows:

Let $M_1 = (x, y)$ and $M_2 = (x, v)$.
It is known that (see for example \cite{Yo})  $M_1$ and $M_2$ are the only nonfree indecomposable maximal Cohen-Macaulay modules.
Moreover, there exists a decomposition
\[\ps R = R^{a_n} \oplus M_1^a \oplus M_2^b,\]
where $a = b = \frac{p^{3n} - a_n}{2}$.

One can check that $\Tor_1^R (M_1, M_2) = 0$ and $\Tor_1^R(M_1, M_1) = \Tor_1^R(M_2, M_2) = 1$.
Let $N = R^m \oplus M_1^c \oplus M_2^d$ be a decomposition of a maximal Cohen-Macaulay module $N$
obtained from a maximal Cohen-Macaulay approximation of $M$.
We know that 
\[\length (\lc^0_{\m}(F^n(M)) = \length (\Tor_1^R(N, \ps R)) = ca + db = (c + d)\frac{p^{3n} - a_n}{2}.\]

\end{eg}

\begin{eg}
There is some numerical evidence that over a normal graded domain of dimension two,  
$\fhk M(n) = cp^{2n} + \gamma(n)$ where $\gamma$ is a bounded (even periodic) function. 
For example:
\begin{enumerate}
\item Consider $R=k[[x,y,z]]/(x^3+y^3+z^3)$ and $M=R/(x,y+z)$. 
Macaulay2 calculations suggest that $\fhk M (n) = \frac{4(p^{2n}-1)}{3}$ for $k = \mathbb F_2, F_5, F_7, F_{11}$. Compare with $\fhk k (n) =  \frac{9p^{2n}-5}{4}$!

\item For $R=\mathbb F_3[[x,y,z]]/(x^4+y^4-z^4)$, $M=R/(x,y^2-z^2)$, and $N= R/(x,y-z)$
Macaulay2 suggests that $\fhk M (n) = 3 (3^{2n}-1)$ and $\fhk N (n) = \frac{9(3^{2n}-1)}{4}$.

\item For $R=\mathbb F_2[[x,y,z]]/(x^5+y^5-z^5)$(or $\mathbb F_3$) and  $M=R/(x,y-z)$, 
computations show that $\fhk M (n) =\frac{16p^{2n}-\gamma(n)}{5}$ with $\gamma(n) = 24$ for odd $n$ and $= 16$ for even $n$. 
\end{enumerate}
\end{eg}

\begin{eg}\label{nonis}
Here are some examples showing that the behavior of $\fhk M$ is not similar 
to the finite length case without the assumption on $\IPD(M)$. 
The point is that the second coefficient is not $0$ as is the case for dimension $2$ and classical  Hilbert--Kunz  functions. 
\begin{enumerate}
\item When $R=k[[x,y,t]]/(x^4+tx^2y^2+y^4)$, and $M=R/(x,y)$, $\fhk M= p^n-2$. 
\item When $R=k[[x,y,t]]/(x^4+txy^2+y^4)$, 
and $M=R/(x,y)$, $\fhk M= 2^{2n}/4+2^n-2$ for $k=\mathbb F_2$ and $n>1$. 
For $k= \mathbb F_3, \mathbb F_5$, it is $(3^{2n}-13)/4+3^n$.

\item Let $R=k[[x,y,t]]/(x^3+txy+y^3)$, and $M=R/(x,y)$. 
Then for $k=\mathbb F_2$, $\fhk M= \frac{2^{2n}+2 \cdot 2^n-\gamma(n)}{3}$. 
For $k=\mathbb F_3$, it is $ \frac{3^{2n}+2 \cdot 3^n-3}{3}$. 
For $k=\mathbb F_5$, it is $\frac{5^{2n}+2 c\dot 5^n-\gamma(n)}{3}$.
For $k=\mathbb F_7$, it is $ \frac{7^{2n}+2 \cdot 7^n-3}{3}$.  
For $k=\mathbb F_{11}$, it is $\frac{(11)^{2n}+2 (11)^n-\gamma(n)}{3}$.
Where $\gamma(n) = 3$ for odd $n$ and $= 5$ for even.
The formula seems to depend on whether $q=1 \mod 3$.
\item When $R=\mathbb F_2[[x,y,t]]/(x^3+txy+y^3)$, and $M=R/(x^3,y^3)$, we get $3\cdot 2^{2n}+2\cdot 2^n-1$. 
For $M=R/(x^2,y^2,xy)$ we get $2^{2n}+2^n-2$.
\end{enumerate}
\end{eg}

\end{document}